\providecommand{\U}[1]{\protect\rule{.1in}{.1in}}
\newtheorem{theorem}{Theorem}
\newtheorem{definition}[theorem]{Definition}
\newtheorem{lemma}[theorem]{Lemma}
\newtheorem{remark}[theorem]{Remark}
\date{}
\begin{document}


\title{On the Absoluteness of $\aleph_1$-Freeness}

\author{Daniel Herden, Alexandra V. Pasi}
\address{
Department of Mathematics,
Baylor University,
Sid Richardson Building,
1410 S.4th Street,
Waco, TX 76706, USA}
\email{daniel\_herden@baylor.edu, alexandra\_pasi@baylor.edu}

\subjclass[2020]{Primary 13C10, 20K20, 20K25;
Secondary 03E35, 03E40}
\keywords{$\aleph_1$-free groups, Pontryagin's Criterion, absoluteness, transitive model extensions}

\maketitle

\begin{abstract}
$\aleph_1$-free groups, abelian groups for which every countable subgroup is free, exhibit a number of interesting algebraic and set-theoretic properties. In this paper, we give a complete proof that the property of being $\aleph_1$-free is absolute; that is, if an abelian group $G$ is $\aleph_1$-free in some transitive model $\textbf{M}$ of ZFC, then it is $\aleph_1$-free in any transitive model of ZFC containing $G$. 
The absoluteness of $\aleph_1$-freeness has the following remarkable consequence: an abelian group $G$ is $\aleph_1$-free in some transitive model of ZFC if and only if it is (countable and) free in some model extension.
This set-theoretic characterization will be the starting point for further exploring the relationship between the set-theoretic and algebraic properties of $\aleph_1$-free groups. In particular, this paper will demonstrate how proofs may be dramatically simplified using model extensions for $\aleph_1$-free groups. 
\end{abstract}

\section{Introduction}

$\aleph_1$-free groups, abelian groups whose countable subgroups are free, are the subject of significant study in abelian group theory. The Baer-Specker group, the direct product of countably many copies of $\mathbb{Z}$, provides a simple example of an $\aleph_1$-free group (which is not free), \cite{Baer, Specker}. The class of $\aleph_1$-free groups exhibits a high degree of algebraic complexity, as evidenced by the test of ring-realization: any ring with free additive structure can be realized as the endomorphism ring of some $\aleph_1$-free group \cite{CG, DG}.

$\aleph_1$-free groups also play an important role in the Whitehead Problem, as every Whitehead group can be shown to be $\aleph_1$-free \cite{Stein}. The Whitehead problem poses the question of whether there exist any non-trivial abelian groups $A$ with $\operatorname{Ext}(A,\mathbb{Z})=0$. Such groups are known as Whitehead groups. Clearly any free abelian group is a Whitehead group, and the Whitehead problem asks, in other words, whether every Whitehead group is free. Shelah proved that the Whitehead problem is undecidable from ZFC \cite{Eklof, Shelah}, by showing that every Whitehead group is free in $\mathbf{L}$ and that there exist non-free Whitehead groups assuming $\mathrm{MA+\lnot CH}$. This constituted the first time that a seemingly purely algebraic problem was demonstrated to be independent from ZFC. $\aleph_1$-free groups have a number of interesting set-theoretic properties which contribute to their unique positioning in such undecidability results \cite{EM,GT}.

We intend to investigate and utilize these set-theoretic properties of $\aleph_1$-free groups as part of a larger project exploring a family of forcings involving $\aleph_1$-free groups. One such property of particular interest is that $\aleph_1$-freeness is absolute: that is, an abelian group $G$ is $\aleph_1$-free in a transitive model $\mathbf{M}$ of ZFC if and only if it is $\aleph_1$-free (in $\mathbf{V}$). The absoluteness of $\aleph_1$-freeness has been informally acknowledged within the community as a ``folk theorem,'' but for our purposes in establishing and exploring various forcings, we require a complete formal proof of the absoluteness of $\aleph_1$-freeness, which we will establish in the first part of this paper. It has also conventionally been understood that this absoluteness result implies that any $\aleph_1$-free group can be made free by forcing the cardinality of the group to collapse to countable. In this sense, we can think of $\aleph_1$-freeness as being equivalent to ``potential freeness" in some appropriate model extension. 

However, in making these ``folkloric'' results rigorous, we can see that there are some formalistic issues, both mathematical and meta-mathematical, that arise which need to be addressed. In particular, we note that while we use, for the sake of convenience, the forcing convention of starting from a countable transitive ground model, this interpretation appears to significantly limit the characterization of $\aleph_1$-freeness as ``potential freeness'' to those groups which are countable in $\mathbf{V}$. There are, however, alternative formalizations of forcing such as the Boolean-valued model approach or the Naturalist account of forcing which do not require the ground model to be countable and thus do not impose the restriction that the $\aleph_1$-free groups in question be countable in $\mathbf{V}$. It should be remarked that while these Boolean-valued models are not transitive, Mostowski collapse can be performed in order to generate transitive models which are isomorphic to these models. For an in-depth discussion of some of the mathematical and philosophical subtleties surrounding these different approaches to forcing, we refer the interested reader to Hamkins' paper on the set-theoretic multiverse \cite{Hamkins}. As yet another alternative philosophical resolution to the restriction to countable models (and thus countable groups), one might even accept the epistemic possibility espoused by Pruss in \cite{Pruss} that all infinite sets are countable.

Regardless of the philosophical approach taken to forcing, we can use the ``potential freeness'' characterization to establish a novel proof technique for theorems concerning $\aleph_1$-free groups. Notably, this technique can even be carried out for $\aleph_1$-free groups of arbitrary cardinality using the countable-transitive-model formalization of forcing. The formalistic concerns regarding forcing with a countable transitive model are addressed in a way analogous to that with which independence results are generally justified, by taking countable fragments of the appropriate parts of ZFC. The proof technique which we present here significantly shortens classical algebraic arguments by moving to a model in which the $\aleph_1$-free groups in question become free. We give two examples of such proofs at the end of this paper. A similar technique using forcing extensions has been utilized by Baumgartner and Hajnal in \cite{BH}, and by Todor\u{c}evi\'{c} and Farah in \cite{TF} to construct sets with certain combinatorial properties. However, the use of generic extensions to prove general results about algebraic objects as demonstrated in this paper is unique in the literature.

The results in this paper lay the groundwork for a larger project involving forcing with $\aleph_1$-free groups, and the novel proof technique provides a key tool for working with such forcings. In particular, the absoluteness of $\aleph_1$-free groups allows us to give a complete characterization of when an $\aleph_1$-free group of cardinality $\aleph_1$ can be forced to be free with cardinal preservation \cite{BHP}. 

\section{Background} \label{sec2}

We begin in this section by introducing and collecting some general results on absoluteness. We refer the reader to \cite[Chapter IV.2--5]{Kunen} for further proofs and details.

We first define relativization, which allows us to explore the notion of truth in a given model $\textbf{M}$.

\begin{definition}
Let $\emph{\textbf{M}}$ be any class. Then for any formula $\phi$, we define $\phi^\emph{\textbf{M}}$, the \emph{relativization of $\phi$ to $\emph{\textbf{M}}$}, inductively as follows:
\begin{enumerate}
\item $(x=y)^\emph{\textbf{M}}$ is $x=y$.
\item $(x \in y)^\emph{\textbf{M}}$ is $x \in y$.
\item $(\phi \land \psi)^\emph{\textbf{M}}$ is $\phi^\emph{\textbf{M}} \land \psi^\emph{\textbf{M}}$.
\item $(\neg \phi)^\emph{\textbf{M}}$ is $\neg(\phi^\emph{\textbf{M}})$.
\item $(\exists x \ \phi)^\emph{\textbf{M}}$ is $\exists x\ (x \in \emph{\textbf{M}} \land \phi^\emph{\textbf{M}})$.
\end{enumerate}
\end{definition}

\begin{definition}
Let $\emph{\textbf{M}}$ be any class. For a sentence $\phi$, ``\,\emph{$\phi$ is true in $\emph{\textbf{M}}$}'' means that $\phi^\emph{\textbf{M}}$ is true. For a set of sentences $S$, ``\emph{\,$S$ is true in $\emph{\textbf{M}}$}'' means that each sentence in $S$ is true in $\emph{\textbf{M}}$.
\end{definition}

We are now ready to give a definition for absoluteness.

\begin{definition}
Let $\phi$ be a formula with free variables $x_1,\ldots,x_n$. If $\emph{\textbf{M}} \subseteq \emph{\textbf{N}}$, $\phi$ is \emph{absolute for $\emph{\textbf{M}}, \emph{\textbf{N}}$} if and only if
\[
\forall x_1,\ldots,x_n \in \emph{\textbf{M}}\, (\phi^\emph{\textbf{M}}(x_1,\ldots,x_n) \longleftrightarrow \phi^\emph{\textbf{N}}(x_1,\ldots,x_n)).
\]
We say that $\phi$ is \emph{absolute for $\emph{\textbf{M}}$} if and only if $\phi$ is absolute for $\emph{\textbf{M}}, \emph{\textbf{V}}$. That is,
\[
\forall x_1,\ldots,x_n \in \emph{\textbf{M}}\, (\phi^\emph{\textbf{M}}(x_1,\ldots,x_n) \longleftrightarrow \phi(x_1,\ldots,x_n)).
\]
\end{definition}

Intuitively, the absoluteness of $\phi$ for $\textbf{M}, \textbf{N}$ means that $\phi(x_1,\ldots,x_n)$ is true in $\textbf{M}$ if and only if it is true in $\textbf{N}$. Note that if $\phi$ is absolute for both $\textbf{M}$ and $\textbf{N}$, and $\textbf{M} \subseteq \textbf{N}$, then $\phi$ is absolute for $\textbf{M}, \textbf{N}$.
The following lemma states that absoluteness is preserved under logical equivalence.

\begin{lemma} \label{logical equivalence}
Suppose $\emph{\textbf{M}} \subseteq \emph{\textbf{N}}$, and both $\emph{\textbf{M}}$ and $\emph{\textbf{N}}$ are models for a set of sentences $S$ such that
\[
S \vdash \forall x_1,\ldots, x_n\, (\phi(x_1,\ldots,x_n) \longleftrightarrow \psi(x_1,\ldots,x_n)).
\]
Then $\phi$ is absolute for $\emph{\textbf{M}}, \emph{\textbf{N}}$ if and only if $\psi$ is absolute for $\emph{\textbf{M}}, \emph{\textbf{N}}$.
\end{lemma}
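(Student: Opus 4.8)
The plan is to prove Lemma~\ref{logical equivalence} by a direct syntactic argument, reducing the statement about relativized formulas to an application of the soundness of first-order logic inside the models \textbf{M} and \textbf{N}. First I would record the one ingredient that makes relativization behave well with respect to provability: for any class \textbf{W} that is a model of a set of sentences $S$, and any sentence $\chi$ with $S \vdash \chi$, the relativization $\chi^{\textbf{W}}$ holds. This is the standard fact that if $S$ is true in \textbf{W} then every logical consequence of $S$ is true in \textbf{W} (see \cite[Chapter IV.2]{Kunen}); one proves it by induction on a derivation of $\chi$ from $S$, using that the relativization of each logical axiom holds and that relativization commutes with modus ponens and generalization. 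I would state this as the key observation and cite Kunen rather than reprove it.

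Next I would apply this observation to the biconditional in the hypothesis. Since $S \vdash \forall x_1,\ldots,x_n\,(\phi \leftrightarrow \psi)$ and both \textbf{M} and \textbf{N} model $S$, the relativizations
\[
\forall x_1,\ldots,x_n\in\textbf{M}\,(\phi^{\textbf{M}}(x_1,\ldots,x_n)\leftrightarrow\psi^{\textbf{M}}(x_1,\ldots,x_n))
\]
and the analogous statement for \textbf{N} both hold; here I would note that $\big(\forall \bar x\,(\phi\leftrightarrow\psi)\big)^{\textbf{W}}$ unfolds, by the definition of relativization, exactly to $\forall \bar x\in\textbf{W}\,(\phi^{\textbf{W}}\leftrightarrow\psi^{\textbf{W}})$. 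In other words, inside each model $\phi$ and $\psi$ are materially equivalent on tuples from that model.

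Finally I would assemble the conclusion. Fix $x_1,\ldots,x_n\in\textbf{M}$; since $\textbf{M}\subseteq\textbf{N}$, these also lie in \textbf{N}. Chaining the equivalences gives
\[
\phi^{\textbf{M}}(x_1,\ldots,x_n)\leftrightarrow\psi^{\textbf{M}}(x_1,\ldots,x_n)\leftrightarrow\psi^{\textbf{N}}(x_1,\ldots,x_n)\leftrightarrow\phi^{\textbf{N}}(x_1,\ldots,x_n),
\]
where the outer two biconditionals come from the previous paragraph and the middle one is the assumed absoluteness of $\psi$ for $\textbf{M},\textbf{N}$. Hence $\phi^{\textbf{M}}(x_1,\ldots,x_n)\leftrightarrow\phi^{\textbf{N}}(x_1,\ldots,x_n)$, so $\phi$ is absolute for $\textbf{M},\textbf{N}$. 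The converse is symmetric, swapping the roles of $\phi$ and $\psi$ (the hypothesis on $S$ is already symmetric in them). I do not anticipate a serious obstacle here; the only point needing care is being explicit that "$\textbf{M}$ and $\textbf{N}$ are models for $S$'' is exactly what licenses passing the provable biconditional through relativization, and keeping the bookkeeping of free-variable tuples straight — so I would make sure the statement of the key observation is phrased for formulas with parameters, not just sentences, or equivalently apply it to the universal closure and then instantiate.
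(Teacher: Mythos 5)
Your argument is correct and is exactly the standard one: the paper itself states this lemma without proof (deferring to Kunen, Chapter IV.2), and the intended argument there is precisely your reduction to the fact that logical consequences of $S$ relativize to any model of $S$, followed by chaining the three biconditionals. No gaps; the care you flag about parameters versus sentences is handled correctly by passing to the universal closure.
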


The following definition introduces a family of formulas, the $\Delta_0$ formulas, which is foundational to our results on absoluteness.

\begin{definition} \label{bound}
A formula is $\Delta_0$ if it is built inductively according to the following rules:
\begin{enumerate}
\item $x \in y$ and $x=y$ are $\Delta_0$.
\item If $\phi, \psi$ are $\Delta_0$, then $\neg \phi$, $\phi \land \psi$, $\phi \lor \psi$, $\phi \to \psi$ and $\phi \leftrightarrow \psi$ are $\Delta_0$.
\item If $\phi$ is $\Delta_0$, then $\exists x\, (x \in y \land \phi)$ and $\forall x\, (\neg(x \in y) \lor \phi)$ are $\Delta_0$.
\end{enumerate}
\end{definition}


We use $\exists x \in y \,\, \phi$ as abbreviation for $\exists x\, (x \in y \land \phi)$ and $\forall x \in y \,\, \phi$ as abbreviation for $\forall x\, (\neg(x \in y) \lor \phi)$, and
we call $\exists x \in y$ and $\forall x \in y$ \emph{bounded quantifiers}. According to Definition \ref{bound}, a formula in which all quantifiers are bounded
is automatically $\Delta_0$. The next lemma connects $\Delta_0$ formulas to absoluteness.

\begin{lemma}
If $\emph{\textbf{M}}$ is transitive and $\phi$ is $\Delta_0$, then $\phi$ is absolute for $\emph{\textbf{M}}$.
\end{lemma}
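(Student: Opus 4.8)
The plan is to proceed by induction on the structure of the $\Delta_0$ formula $\phi$, following the inductive clauses of Definition \ref{bound}. Since absoluteness of $\phi$ for $\textbf{M}$ means $\forall x_1,\ldots,x_n \in \textbf{M}\,(\phi^{\textbf{M}} \leftrightarrow \phi)$, the base case requires handling atomic formulas $x \in y$ and $x = y$: here relativization does nothing by clauses (1)--(2) of the definition of relativization, so $(x \in y)^{\textbf{M}}$ is literally $x \in y$ and $(x = y)^{\textbf{M}}$ is literally $x = y$, giving absoluteness trivially and without any hypothesis on $\textbf{M}$.

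For the propositional connectives in clause (2), I would observe that relativization commutes with $\neg$, $\land$ (and hence with $\lor, \to, \leftrightarrow$, which are definable from these, invoking Lemma \ref{logical equivalence} if one wishes to be pedantic about the abbreviations). So if $\phi$ and $\psi$ are absolute for $\textbf{M}$, then for $x_1,\ldots,x_n \in \textbf{M}$ we get $(\neg\phi)^{\textbf{M}} = \neg(\phi^{\textbf{M}}) \leftrightarrow \neg\phi$ and $(\phi \land \psi)^{\textbf{M}} = \phi^{\textbf{M}} \land \psi^{\textbf{M}} \leftrightarrow \phi \land \psi$, and similarly for the other connectives. No transitivity is needed here either.

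The one place transitivity is essential is clause (3), the bounded quantifiers, and this is the main point of the argument. Suppose $\phi(x, x_1, \ldots, x_n)$ is $\Delta_0$ and absolute for $\textbf{M}$, and consider $\psi := \exists x\,(x \in y \land \phi)$ where $y$ is among the free variables; fix parameters $x_1, \ldots, x_n, y \in \textbf{M}$. By the definition of relativization, $\psi^{\textbf{M}}$ is $\exists x\,(x \in \textbf{M} \land x \in y \land \phi^{\textbf{M}})$. The forward direction is immediate: a witness $x \in \textbf{M}$ with $x \in y$ and $\phi^{\textbf{M}}$ gives, by the inductive hypothesis, the same $x$ witnessing $\phi$, hence $\psi$ holds. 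For the converse, suppose $\psi$ holds, so there is some $x$ (not a priori in $\textbf{M}$) with $x \in y$ and $\phi(x, x_1, \ldots, x_n)$; here is where transitivity enters: since $y \in \textbf{M}$ and $\textbf{M}$ is transitive, $x \in y$ forces $x \in \textbf{M}$, so $x$ is a legitimate witness for the relativized quantifier, and the inductive hypothesis converts $\phi$ into $\phi^{\textbf{M}}$, yielding $\psi^{\textbf{M}}$. The bounded universal quantifier $\forall x\,(\neg(x \in y) \lor \phi)$ is handled dually (or reduced to the existential case via $\neg\exists x\,(x \in y \land \neg\phi)$ using the already-established connective cases and Lemma \ref{logical equivalence}). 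Thus the only genuine obstacle — and it is a mild one — is making sure transitivity is invoked exactly at the step where an externally chosen bounded witness must be pulled back into $\textbf{M}$; everything else is bookkeeping on the inductive definition.
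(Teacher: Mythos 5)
Your proof is correct and is exactly the standard argument (structural induction on $\Delta_0$ formulas, with transitivity invoked precisely to pull a bounded witness back into $\textbf{M}$); the paper itself states this lemma without proof, deferring to Kunen, and your argument is the one found there. Nothing is missing.
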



In addition to the previous exposition, we need also to account for the absoluteness of defined notions which take the form of functions. This gives rise to the following definition.

\begin{definition} If $\emph{\textbf{M}} \subseteq \emph{\textbf{N}}$, and $F(x_1,\ldots,x_n)$ is a well-defined function both for $\emph{\textbf{M}}$ and $\emph{\textbf{N}}$, we say $F$ is absolute for $\emph{\textbf{M}}, \emph{\textbf{N}}$ if the formula $F(x_1,\ldots,x_n)=y$ is absolute for $\emph{\textbf{M}}, \emph{\textbf{N}}$. \\
More formally, suppose that $F(x_1,\ldots,x_n)$ was defined as the unique $y$ such that $\phi(x_1,\ldots,x_n,y)$. Then $F(x_1,\ldots,x_n)$ is a well-defined function for $\emph{\textbf{M}}, \emph{\textbf{N}}$ only if
\[\forall x_1,\ldots,x_n\, \exists!y\,\, \phi(x_1,\ldots,x_n,y)\]
is true in both $\emph{\textbf{M}}$ and $\emph{\textbf{N}}$. Assuming this, $F$ is absolute for $\emph{\textbf{M}}, \emph{\textbf{N}}$ if and only if $\phi$ is absolute.
\end{definition}

This definition allows us to make full sense of the following lemma, which states that absolute notions are closed under composition.

\begin{lemma}
Let $\emph{\textbf{M}} \subseteq \emph{\textbf{N}}$, and suppose that formula $\phi(x_1,\ldots,x_n)$ and functions $F(x_1,\ldots,x_n)$, $G_i(y_1,\ldots,y_m)$ $(i=1,\ldots,n)$ are absolute for $\emph{\textbf{M}}, \emph{\textbf{N}}$. Then so are the formula
\[
\phi(G_1(y_1,\ldots,y_m),\ldots,G_n(y_1,\ldots,y_m))
\]
and the function
\[
F(G_1(y_1,\ldots,y_m),\ldots,G_n(y_1,\ldots,y_m)).
\]
\end{lemma}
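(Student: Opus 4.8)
The plan is to reduce the composite to a single application of the hypotheses by introducing fresh existentially quantified ``witness'' variables for the values $G_i(\vec y)$, writing $\vec y$ for $y_1,\ldots,y_m$. For any structure in which every $G_i(\vec y)$ is defined one has the logically valid equivalence
\[
\phi\bigl(G_1(\vec y),\ldots,G_n(\vec y)\bigr)\ \longleftrightarrow\ \exists z_1\cdots\exists z_n\Bigl(\textstyle\bigwedge_{i=1}^{n}\, G_i(\vec y)=z_i\ \wedge\ \phi(z_1,\ldots,z_n)\Bigr),
\]
and, since each $G_i$ is by hypothesis a well-defined function for both $\mathbf{M}$ and $\mathbf{N}$, this equivalence holds after relativization to $\mathbf{M}$ and to $\mathbf{N}$ (alternatively, one may invoke Lemma \ref{logical equivalence} with $S$ containing the sentences asserting that each $G_i$ is total and single-valued). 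It therefore suffices to prove that the right-hand side is absolute for $\mathbf{M},\mathbf{N}$, and likewise for $F$ it will suffice to handle the corresponding formula with $w$ as an extra free variable.

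First I would fix $\vec y\in\mathbf{M}$ and unwind the relativization of the right-hand side to $\mathbf{M}$: since $G_i$ is well-defined in $\mathbf{M}$ there is a unique $a_i\in\mathbf{M}$ with $(G_i(\vec y)=a_i)^{\mathbf{M}}$, namely $a_i=G_i^{\mathbf{M}}(\vec y)$, so the bounded existentials collapse and the relativized right-hand side holds in $\mathbf{M}$ if and only if $\phi^{\mathbf{M}}(a_1,\ldots,a_n)$ holds. In the same way, the relativized right-hand side holds in $\mathbf{N}$ if and only if $\phi^{\mathbf{N}}(b_1,\ldots,b_n)$ holds, where $b_i=G_i^{\mathbf{N}}(\vec y)$ is the unique witness available in $\mathbf{N}$.

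The crux of the argument --- and the only place where some care is genuinely needed, because existential quantifiers are not in general absolute --- is to identify the witnesses: from $\vec y\in\mathbf{M}\subseteq\mathbf{N}$, $a_i\in\mathbf{M}\subseteq\mathbf{N}$, and the absoluteness of the function $G_i$ (i.e.\ of the formula $G_i(\vec y)=z$), the statement $(G_i(\vec y)=a_i)^{\mathbf{M}}$ transfers to $(G_i(\vec y)=a_i)^{\mathbf{N}}$, and uniqueness of the witness in $\mathbf{N}$ forces $b_i=a_i$. In particular every $b_i$ lies in $\mathbf{M}$, so the absoluteness of $\phi$ applies to the tuple $(a_1,\ldots,a_n)\in\mathbf{M}^n$ and yields $\phi^{\mathbf{M}}(a_1,\ldots,a_n)\leftrightarrow\phi^{\mathbf{N}}(a_1,\ldots,a_n)$; since $a_i=b_i$ this is exactly $\phi^{\mathbf{M}}(a_1,\ldots,a_n)\leftrightarrow\phi^{\mathbf{N}}(b_1,\ldots,b_n)$. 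Chaining these equivalences gives absoluteness of the composite formula for $\mathbf{M},\mathbf{N}$.

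Finally, for the composite function $F(G_1(\vec y),\ldots,G_n(\vec y))$ I would apply the formula case just established to the defining formula $\psi(x_1,\ldots,x_n,w)$ of $F$ (which asserts $F(x_1,\ldots,x_n)=w$ and is absolute by hypothesis), substituting $G_1(\vec y),\ldots,G_n(\vec y)$ for $x_1,\ldots,x_n$ and leaving $w$ free --- equivalently, substituting also the identity function, which is trivially absolute and well-defined. Well-definedness of the composite in both $\mathbf{M}$ and $\mathbf{N}$ is immediate from well-definedness of $F$ and of each $G_i$, and then the absoluteness of $\psi\bigl(G_1(\vec y),\ldots,G_n(\vec y),w\bigr)$ for $\mathbf{M},\mathbf{N}$ is, by definition, the absoluteness of the composite function, completing the proof.
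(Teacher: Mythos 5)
Your proof is correct. The paper itself gives no proof of this lemma --- it is stated as background and deferred to Kunen [Chapter IV.2--5] --- and your argument (fix $\vec y\in\mathbf{M}$, use well-definedness in $\mathbf{M}$ to obtain the unique witnesses $a_i=G_i^{\mathbf{M}}(\vec y)$, transfer $G_i(\vec y)=a_i$ to $\mathbf{N}$ by absoluteness of $G_i$ and identify $G_i^{\mathbf{N}}(\vec y)=a_i$ by uniqueness, then apply absoluteness of $\phi$ to the tuple $(a_1,\ldots,a_n)\in\mathbf{M}^n$, and finally reduce the function case to the formula case via the defining formula of $F$) is exactly the standard proof that the cited source has in mind.
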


Using the definitions and results above, we can establish the absoluteness of a number of defined notions and formulas from set theory.
\begin{lemma}
The following are absolute for any transitive model $\emph{\textbf{M}}$ of $\rm{ZFC}$:
\begin{multicols}{2}
\begin{enumerate}
\item ordered pairs $(x,y)$,
\item set union $\bigcup x$,
\item set inclusion $x \subseteq y$,
\item $x$ is an ordinal,
\item $\alpha+\beta$, $\alpha \cdot \beta$ for ordinals $\alpha, \beta$,
\item $\omega$ and $(\mathbb{Z},+,\cdot)$.
\end{enumerate}
\end{multicols}
\end{lemma}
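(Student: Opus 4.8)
The plan is to prove each item by exhibiting, for the relevant notion, either a $\Delta_0$ defining formula or a composition of previously-established absolute functions and formulas, and then invoking the lemma that $\Delta_0$ formulas are absolute for transitive models together with the closure-under-composition lemma and Lemma~\ref{logical equivalence}. For (1), one writes $z = (x,y)$ as the $\Delta_0$ formula asserting $\forall w \in z\,(w = \{x\} \lor w = \{x,y\}) \land \{x\} \in z \land \{x,y\} \in z$, where $w = \{x\}$ and $w = \{x,y\}$ are themselves unwound into bounded quantifiers; since transitive models of $\mathrm{ZFC}$ are closed under pairing, the function is well-defined in both models. For (2), $y = \bigcup x$ is $\Delta_0$: $\forall z \in y\, \exists w \in x\,(z \in w) \land \forall w \in x\, \forall z \in w\,(z \in y)$. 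For (3), $x \subseteq y$ is literally $\forall z \in x\,(z \in y)$, which is $\Delta_0$.

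For (4), I would use the standard characterization: $x$ is an ordinal iff $x$ is transitive and linearly ordered by $\in$ (using Foundation to get well-ordering for free), and ``$x$ is transitive'' is $\forall y \in x\,\forall z \in y\,(z \in x)$ while linearity by $\in$ is a conjunction of bounded-quantifier statements over $x$; the whole thing is $\Delta_0$, and by Lemma~\ref{logical equivalence} it is equivalent over $\mathrm{ZFC}$ to whatever official definition of ``ordinal'' one has adopted. For (5), ordinal addition and multiplication are defined by transfinite recursion; here I would cite the absoluteness of functions defined by recursion on a well-founded absolute relation (the machinery developed in Kunen~\cite[Chapter IV]{Kunen}), or alternatively give the explicit order-type description ($\alpha + \beta$ is the order type of $(\{0\}\times\alpha)\cup(\{1\}\times\beta)$ under lexicographic order, and similarly for $\cdot$), reducing to absoluteness of order types of absolute well-orders, which in turn reduces to the absoluteness of ``is an ordinal,'' ``is an order isomorphism,'' and the composition lemma.

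For (6), $y = \omega$ is captured by the $\Delta_0$ formula stating that $y$ is a nonzero limit ordinal with no limit ordinal below it: ``$y$ is an ordinal'' $\land\ \emptyset \in y\ \land\ \forall z \in y\,(z \cup \{z\} \in y)\ \land\ \forall z \in y\,\exists w \in y\,(z \in w \land \forall u \in w\,(u \in z \lor u = z))$ — i.e. every element of $y$ has a successor in $y$ and is itself either $0$ or a successor; hence $\omega$ is absolute. Finally $(\mathbb{Z},+,\cdot)$ is absolute because $\mathbb{Z}$ can be constructed explicitly from $\omega$ by an absolute procedure (e.g. as equivalence classes of pairs from $\omega\times\omega$, with the equivalence relation and the ring operations all given by $\Delta_0$ conditions), so the construction is a composition of absolute functions applied to the absolute object $\omega$; by the composition lemma the resulting structure is absolute. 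The main obstacle is item (5): unlike the others it is genuinely recursive rather than directly $\Delta_0$, so the honest route is either to import the general theorem on absoluteness of transfinite recursion along an absolute well-founded relation or to spell out the order-type formulation carefully and check that each ingredient is among the notions already shown absolute; I would favor the order-type formulation since it keeps everything within the $\Delta_0$/composition framework already set up in this section.
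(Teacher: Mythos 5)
Your approach is the standard one and is essentially what the paper relies on: the paper's own ``proof'' consists of a single remark about how $(\mathbb{Z},+,\cdot)$ is built from $\omega\times\omega$ and ordinal arithmetic, deferring items (1)--(5) entirely to the cited background (Kunen, Chapter IV.2--5), which is exactly the $\Delta_0$-formula-plus-composition machinery you spell out. Your treatments of (1)--(5) and of the construction of $\mathbb{Z}$ are all correct, and your handling of (5) via either recursion-absoluteness or order types is the honest way to do it. One concrete slip: your $\Delta_0$ formula for $y=\omega$ does not say what your prose says it says. The conjunct $\forall z \in y\,\exists w \in y\,(z \in w \land \forall u \in w\,(u \in z \lor u = z))$ asserts only that every element of $y$ has its successor in $y$ (duplicating your previous conjunct $\forall z\in y\,(z\cup\{z\}\in y)$); it does not assert that every element of $y$ is $0$ or a successor, so as written the formula is satisfied by every limit ordinal, e.g.\ $\omega\cdot 2$. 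You need the separate clause $\forall z \in y\,\bigl(z=\emptyset \lor \exists w \in z\,(z = w\cup\{w\})\bigr)$, which is still $\Delta_0$, so the repair is immediate and the argument then goes through.
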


\begin{proof}
Note that $\mathbb{Z}$ is formally defined as a set of ordered pairs from $\omega \times \omega$.
The operations $+$ and $\cdot$ on $\mathbb{Z}$
are defined appropriately and are primarily determined by the ordinal arithmetic of $\omega$.
\end{proof}

The absoluteness of finite sets will be of particular importance.

\begin{lemma}
If $\emph{\textbf{M}}$ is a transitive model of $\rm{ZFC}$, then every finite subset of $\emph{\textbf{M}}$ is in $\emph{\textbf{M}}$, and
``\,$x$ is finite'' is absolute for $\emph{\textbf{M}}$.
\end{lemma}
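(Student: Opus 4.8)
The plan is to prove the two assertions in order; the first does essentially all the work for the second.

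\medskip

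\noindent\emph{First assertion.} I would argue by induction on the cardinality $n$ of a finite subset $A=\{a_1,\dots,a_n\}\subseteq\mathbf{M}$. For $n=0$: since $\mathbf{M}\models\mathrm{ZFC}$ it contains a set $z$ with $\mathbf{M}\models$ ``$z$ is empty'', and by transitivity of $\mathbf{M}$ (equivalently, by absoluteness of the $\Delta_0$ formula ``$z=\emptyset$'') that $z$ is the genuine $\emptyset$, so $\emptyset\in\mathbf{M}$. For the inductive step, suppose $u:=\{a_1,\dots,a_n\}\in\mathbf{M}$ and $a_{n+1}\in\mathbf{M}$. Since $\mathbf{M}$ models Pairing and Union it is closed under the operation $(u,v)\mapsto u\cup\{v\}$, so there is $z\in\mathbf{M}$ with $\mathbf{M}\models$ ``$z=u\cup\{a_{n+1}\}$''. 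But ``$z=u\cup\{v\}$'' is equivalent to
\[
(\forall w\in z)(w\in u\lor w=v)\ \land\ (\forall w\in u)(w\in z)\ \land\ v\in z,
\]
which uses only bounded quantifiers, hence is $\Delta_0$ and absolute for $\mathbf{M}$; therefore $z$ really is $\{a_1,\dots,a_n,a_{n+1}\}$, completing the induction.

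\medskip

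\noindent\emph{Second assertion.} I would use the definition ``$x$ is finite'' $\equiv$ ``$\exists n\in\omega\ \exists f\ \bigl(f$ is a bijection from $n$ onto $x\bigr)$''. The crucial preliminary point is that, although this is $\Sigma_1$ on its face, the matrix ``$f$ is a bijection from $n$ onto $x$'' is equivalent to a $\Delta_0$ formula: ``$f$ is a set of ordered pairs'', ``$f$ is functional and injective'', ``$\operatorname{dom}(f)=n$'' and ``$\operatorname{ran}(f)=x$'' can all be written with quantifiers bounded by $f$, $n$, $x$, and $\bigcup\bigcup f$ (whose elements are exactly the coordinates occurring in pairs of $f$), and $\bigcup$ is absolute for transitive $\mathbf{M}$. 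Together with the absoluteness of $\omega$ and of ``$n\in\omega$'' from the preceding lemma, this immediately yields the forward implication: if $\mathbf{M}\models$ ``$x$ is finite'', any witnessing $n,f\in\mathbf{M}$ genuinely witnesses the finiteness of $x$ in $\mathbf{V}$.

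\medskip

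\noindent For the reverse implication I would invoke the first assertion. Let $x\in\mathbf{M}$ be finite in $\mathbf{V}$, say $x=\{a_0,\dots,a_{n-1}\}$ with the $a_i$ distinct. Transitivity of $\mathbf{M}$ gives each $a_i\in\mathbf{M}$, and $n\in\omega\subseteq\mathbf{M}$, so each $i<n$ lies in $\mathbf{M}$ as well. Each ordered pair $(i,a_i)=\{\{i\},\{i,a_i\}\}$ is a finite subset of $\mathbf{M}$, hence in $\mathbf{M}$ by the first assertion (applied twice); consequently $f:=\{(i,a_i):i<n\}$ is again a finite subset of $\mathbf{M}$, so $f\in\mathbf{M}$. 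Finally $\mathbf{M}$ correctly recognizes $f$ as a bijection from $n$ onto $x$ by the $\Delta_0$-absoluteness noted above, so $\mathbf{M}\models$ ``$x$ is finite''. The only step demanding real care is the bookkeeping that shows ``$f$ is a bijection from $n$ onto $x$'' is (provably equivalent to) a $\Delta_0$ formula; once that is done, everything else reduces to the short induction of the first assertion and the transitivity of $\mathbf{M}$.
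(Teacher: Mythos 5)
Your proof is correct, and it is essentially the standard argument (the one in Kunen that the paper is implicitly relying on — the paper states this lemma without proof as background). The two delicate points — that $u\mapsto u\cup\{v\}$ is a $\Delta_0$-definable operation under which $\mathbf{M}$ is closed, and that ``$f$ is a bijection from $n$ onto $x$'' is expressible with bounded quantifiers so that the witnesses $n,f$ transfer in both directions — are both handled properly, including the correct use of the first assertion to get the ordered pairs and then $f$ itself into $\mathbf{M}$.
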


Our last lemma in this section addresses the absoluteness of formulas which are built recursively over $\omega$ from absolute formulas.

\begin{lemma}
Suppose $F: \emph{\textbf{V}} \to \emph{\textbf{V}}$, and let $G: \omega \to \emph{\textbf{V}}$ be defined so that
\[
\forall n \in \omega\, [G(n)=F(G\upharpoonright n)],
\]
where $G\upharpoonright n$ denotes the restriction of $G$ to the domain $n=\{0,1,\ldots,n-1\}$.

Let $\emph{\textbf{M}}$ be a transitive model of $\rm{ZFC}$ and assume that $F$ is absolute for $\emph{\textbf{M}}$. Then also $G$ is absolute for $\emph{\textbf{M}}$.
\end{lemma}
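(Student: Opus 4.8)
The plan is to prove this by recursion on $n \in \omega$, showing that for every $n$, the statement $G(n) = y$ is absolute for $\mathbf{M}$. The natural approach is to set up an auxiliary function that records the whole finite history of $G$ up to stage $n$, argue that this history function is absolute, and then extract $G$ itself. Concretely, I would define $H: \omega \to \mathbf{V}$ by $H(n) = G \upharpoonright n$, so that $H(0) = \emptyset$ and $H(n+1) = H(n) \cup \{(n, F(H(n)))\}$; then $G(n) = F(H(n))$, and since $F$ and ordered pairs and set union are absolute, it suffices to prove that $H$ is absolute for $\mathbf{M}$.

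First I would verify that $\mathbf{M}$ actually computes $G$ and $H$ correctly as functions, i.e., that the defining recursion has a solution in $\mathbf{M}$ that agrees (as a set) with the real $G$. This uses that $\omega$ is absolute, that "$f$ is a function with domain $n$" is expressible by a $\Delta_0$ formula (hence absolute), and that the recursion theorem is provable in ZFC, so both $\mathbf{M}$ and $\mathbf{V}$ prove the existence and uniqueness of the object satisfying $\forall n \in \omega\,[G(n) = F(G \upharpoonright n)]$; the absoluteness of $F$ then forces the two solutions to coincide. Next, the core step: I would prove by (external) induction on $n$ that $(H(n))^{\mathbf{M}} = H(n)$, equivalently that $\mathbf{M}$ and $\mathbf{V}$ assign the same value to $H(n)$. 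The base case $H(0) = \emptyset$ is absolute since $\emptyset$ is absolute. For the inductive step, assuming $(H(n))^{\mathbf{M}} = H(n)$, we have $(H(n+1))^{\mathbf{M}} = (H(n))^{\mathbf{M}} \cup \{(n,\, F^{\mathbf{M}}((H(n))^{\mathbf{M}}))\}$; using the induction hypothesis, the absoluteness of $F$, the absoluteness of ordered pairs, and the absoluteness of $\cup$ and of finite set formation, this equals $H(n) \cup \{(n, F(H(n)))\} = H(n+1)$.

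Having shown each $H(n)$ is absolute, I would then deduce absoluteness of $G$. One clean way: the formula "$G(n) = y$" is logically equivalent (provably in ZFC, hence invoking Lemma \ref{logical equivalence}) to "$\exists z\,(z = H(n) \land y = F(z))$", and since "$z = H(n)$" and "$y = F(z)$" are absolute and the existential quantifier here can be bounded (or simply eliminated, as $z$ is uniquely determined), $G(n) = y$ is absolute for $\mathbf{M}$. If one prefers to avoid even the mild subtlety of the leading quantifier, one can note that $G \upharpoonright n = H(n)$ is absolute for each $n$ and that $G = \bigcup_{n \in \omega} H(n)$, with the union over $\omega$ being an absolute operation in the relevant sense.

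The main obstacle I anticipate is not any single hard computation but rather the bookkeeping needed to make "$\mathbf{M}$ sees the same recursively-defined function" fully rigorous: one must be careful that the recursion theorem is being applied inside $\mathbf{M}$ to the relativized $F$, that "$f$ is a function," "$\operatorname{dom}(f) = n$," and "$f \upharpoonright n$" are all absolute (the first two are $\Delta_0$, the third follows from absoluteness of ordered pairs and bounded separation), and that uniqueness of the solution — which holds in both models — lets us transfer the agreement established level-by-level to a global identity of the two function-objects. Once that framework is in place, the induction on $n$ using absoluteness of $F$, $\cup$, ordered pairs, and finiteness goes through routinely.
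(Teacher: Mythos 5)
Your argument is correct. The paper itself gives no proof of this lemma---it is stated as background and deferred to Kunen \cite{Kunen}---and what you have written is essentially the standard argument found there: an external induction on $n$ showing that the finite approximations $G\upharpoonright n$ computed in $\mathbf{M}$ and in $\mathbf{V}$ coincide, using the absoluteness of $F$, of ordered pairs, and of finite unions, together with the uniqueness clause of the recursion theorem in both models to identify the two function-objects. The one hypothesis you use implicitly and should make explicit is that $\mathbf{M}$ is closed under $F$ (so that $F^{\mathbf{M}}(H(n))\in\mathbf{M}$ at each stage); this is exactly what the paper's definition of ``$F$ is a well-defined function for $\mathbf{M}$'' packages into the assumption that $F$ is absolute for $\mathbf{M}$, so your proof is complete as written.
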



\section{Results} \label{sec3}

We will now apply the absoluteness results in Section \ref{sec2} to some algebraic notions. After relating in Theorem \ref{aleph1-freeness} the $\aleph_1$-freeness of a group to the freeness of the pure subgroups generated by its finite subsets, we will be ready to establish the main result of this section, Theorem \ref{aleph1-free absolute}, namely the absoluteness of $\aleph_1$-freeness.

\subsection{Basic Absoluteness Results for Abelian Groups}

In this section, we collect some first basic absoluteness results on abelian groups. We will follow the algebraic convention of using $G$ as a shorthand to denote the abelian group $( G,+ )$.

\begin{lemma} Suppose $\emph{\textbf{M}}$ is a transitive model of $\operatorname{ZFC}$. Then ``\,$G$ is an abelian group'' is absolute for $\emph{\textbf{M}}$.
\end{lemma}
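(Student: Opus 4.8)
The plan is to verify that the standard first-order definition of ``$G$ is an abelian group'' can be written using only bounded quantifiers over $G$ (and the graph of the operation $+$), so that the previously established machinery applies. Recall that formally $G = (G_0, +)$ is an ordered pair whose first coordinate is the underlying set and whose second coordinate is a ternary relation $+ \subseteq G_0 \times G_0 \times G_0$. First I would unwind the assertion ``$G$ is an abelian group'' into its defining conjuncts: (i) $G$ is an ordered pair $(G_0, A)$; (ii) $A$ is a function from $G_0 \times G_0$ to $G_0$, i.e. $A \subseteq G_0 \times G_0 \times G_0$ together with the usual totality-and-single-valuedness clause; (iii) associativity; (iv) existence of a (two-sided) identity element; (v) existence of inverses; (vi) commutativity.

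The key observation is that each of these conjuncts, once we have named the components $G_0$ and $A$ via the absoluteness of ordered pairs and $\bigcup$ (so that $G_0$ and $A$ are obtained from $G$ by absolute operations, and everything quantifies over $\bigcup\bigcup G$ at worst), is expressible with all quantifiers bounded — every ``$\exists a$'' or ``$\forall a$'' ranges over $G_0$, and every reference to ``$a + b = c$'' is the $\Delta_0$ statement $(a,b,c) \in A$ preceded by a bounded quantifier asserting the relevant triple lies in $A$. For instance associativity becomes $\forall a \in G_0\, \forall b \in G_0\, \forall c \in G_0\, \forall d \in G_0\, \forall e \in G_0\, \forall f \in G_0\, (\ldots)$ with a propositional combination of ``$(a,b,d)\in A$'' etc.; the identity clause is $\exists e \in G_0\, \forall a \in G_0\, ((e,a,a) \in A \land (a,e,a) \in A)$; and so on. Thus the whole formula is (logically equivalent to) a $\Delta_0$ formula in the parameter $G$, using the absoluteness of the pairing function, $\bigcup$, and set membership/equality already collected in Section~\ref{sec2}.

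The proof then concludes: since each defining conjunct is $\Delta_0$ modulo the absolute auxiliary functions $x \mapsto \bigcup x$ and the ordered-pair decomposition, the closure-under-composition lemma and the lemma that $\Delta_0$ formulas are absolute for transitive $\emph{\textbf{M}}$ together yield that ``$G$ is an abelian group'' is absolute for $\emph{\textbf{M}}$; Lemma~\ref{logical equivalence} licenses replacing the official group-theoretic formula by the $\Delta_0$ version since $\operatorname{ZFC}$ proves them equivalent. I expect the only real care-point — not a deep obstacle, but the place where rigor is needed — to be bookkeeping: making sure that when we ``extract'' $G_0$ and $A$ from $G$ we are genuinely applying absolute operations (ordered-pair projections, realized via $\bigcup$ and separation), and that the bound sets for the quantifiers (such as $G_0$ itself, or $G_0 \times G_0 \times G_0$ for ranging over potential triples) are themselves obtained absolutely, so that nothing smuggles in an unbounded quantifier. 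Once that is checked, the result is immediate from the collected lemmas.
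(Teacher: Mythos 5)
Your proposal is correct and follows essentially the same route as the paper: both reduce ``\,$G$ is an abelian group'' to a $\Delta_0$ formula (associativity, existence of identity and inverses, commutativity) with all quantifiers bounded by the underlying set of $G$, and then invoke the absoluteness of $\Delta_0$ formulas for transitive models. The only difference is that you spell out the decomposition of $(G,+)$ into its underlying set and the graph of the operation via absolute ordered-pair machinery, a bookkeeping step the paper's proof leaves implicit by writing $x+y=z$ directly inside its three $\Delta_0$ sentences.
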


\begin{proof}
Suppose $( G,+ ) \in \textbf{M}$. Note the logical equivalence
\[
\text{``$G$ is an abelian group''} \iff \phi_1 \land \phi_2 \land \phi_3,
\]
where $\phi_1, \phi_2$ and $\phi_3$ denote the $\Delta_0$ sentences
\begin{enumerate}
\item $\forall x \in G\ \forall y \in G\ \forall z \in G\ (x+(y+z)=(x+y)+z)$,
\item $\exists u \in G\ ((\forall x \in G\ (x+u=x)) \land (\forall x \in G\ \exists y \in G\ (x+y=u)))$,
\item $\forall x \in G\ \forall y \in G\ (x+y=y+x)$. \qedhere
\end{enumerate}
\end{proof}

\begin{lemma} Suppose $\emph{\textbf{M}}$ is a transitive model of $\operatorname{ZFC}$ and $( G,+ ) \in \mathbf{M}$ is abelian. The defined notions ``\,$0_G$'' and ``\,$nx$'' $(n\in \mathbb{Z}, x\in G)$ are absolute for $\emph{\textbf{M}}$.

\begin{proof}
To see that ``\,$0_G$'' is an absolute defined notion, note that $0_G$ is uniquely defined by
\[z=0_G \iff (z\in G \land \forall x \in G\ (x+z=x)),
\]
where the right-hand side of the above equivalence is a $\Delta_0$ formula.

For the absoluteness of ``$nx$'', note that $nx$ for $n\ge 0$ is formally defined recursively on $n\in \omega$ by $0x=0_G$ and $nx=(n-1)x+x$ for all $n>0$. Absoluteness easily extends to $n<0$ as $nx$ is the additive inverse of $(-n)x$.
\end{proof}
\end{lemma}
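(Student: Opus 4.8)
The plan is to treat each defined notion by producing an explicit $\Delta_0$ formula that defines it --- or, for $nx$ with $n \geq 0$, by presenting it as a recursion over $\omega$ built from data already known to be absolute --- and then to invoke the machinery of Section~\ref{sec2}: that $\Delta_0$ formulas are absolute for transitive models, that $\omega$ and $(\mathbb{Z},+,\cdot)$ are absolute, that absolute notions are closed under composition, and that a function defined by recursion over $\omega$ from an absolute function is absolute.

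For $0_G$, since ``$G$ is an abelian group'' is absolute and $G \in \mathbf{M}$, the group $G$ is a genuine abelian group in both $\mathbf{M}$ and $\mathbf{V}$, so $\exists! z\,(z \in G \land \forall x \in G\ (x+z=x))$ holds in both; hence $0_G$ is a well-defined notion for $\mathbf{M},\mathbf{V}$, and as its defining formula uses only bounded quantifiers and atomic subformulas it is $\Delta_0$, so $0_G$ is absolute. For $nx$ with $n \geq 0$ I would fix $x \in G$ and realize $n \mapsto nx$ as the function $g_x \colon \omega \to G$ with $g_x(n) = F_x(g_x \upharpoonright n)$, where $F_x(s) = s(m) + x$ if $s$ is a function whose domain is a successor ordinal $m+1$, and $F_x(s) = 0_G$ otherwise. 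Since $F_x$ is a composition of notions already shown absolute --- $0_G$, the operation $+$ on $G$, function application, and the (absolute) case split on whether $\operatorname{dom}(s)$ is a successor --- it is absolute for $\mathbf{M}$, so the recursion lemma yields that $g_x$, and hence $nx = g_x(n)$, is absolute. Finally, for $n < 0$ the element $nx$ is the unique $y \in G$ with $y + (-n)x = 0_G$; since $-n \in \omega$, both $(-n)x$ and $0_G$ are absolute, and this defining condition is again built from absolute notions, so closure under composition completes the argument.

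I expect the one point that really needs care to be the formalization of the recursion defining $nx$ for $n \geq 0$: the recursion lemma is phrased for a single function $\omega \to \mathbf{V}$ and for $F$ defined on all of $\mathbf{V}$, so one must (a) fix $F_x$ so that it is a total absolute function whose behavior off the intended inputs is harmless, and (b) carry the parameter $x$ through the recursion lemma uniformly. Both are routine once the bounded-quantifier form of everything in sight is noted, but they are the places where a fully formal write-up would have to be explicit.
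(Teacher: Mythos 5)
Your proposal is correct and follows essentially the same route as the paper: a $\Delta_0$ definition for $0_G$, a recursion over $\omega$ (via the recursion lemma of Section~\ref{sec2}) for $nx$ with $n \geq 0$, and reduction to the additive inverse of $(-n)x$ for $n < 0$. Your explicit construction of the function $F_x$ and your attention to carrying the parameter $x$ through the recursion lemma merely spell out details the paper leaves implicit.
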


\begin{lemma} Suppose $\emph{\textbf{M}}$ is a transitive model of $\operatorname{ZFC}$ and $( G,+ ) \in \mathbf{M}$ is abelian. Then ``\,$G$ is torsion-free'' is absolute for $\emph{\textbf{M}}$.
\end{lemma}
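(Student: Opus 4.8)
The plan is to reduce ``$G$ is torsion-free'' to a statement built entirely from notions already shown to be absolute, namely the integers $\mathbb{Z}$, the scalar multiplication $nx$, and the zero element $0_G$, together with bounded quantification over $G$ and unbounded quantification over $\omega$ (or $\mathbb{Z}$). The key observation is the logical equivalence
\[
\text{``$G$ is torsion-free''} \iff \forall x \in G\ \forall n \in \omega\ (nx = 0_G \to (x = 0_G \lor n = 0)),
\]
which is provable in ZFC (indeed just from the group axioms and the definition of torsion). Since ``$x \in G$'' and ``$n \in \omega$'' can be used as bounding predicates, the formula on the right is, after substituting the absolute defined notions $nx$ and $0_G$, a formula all of whose quantifiers are bounded by sets in $\mathbf{M}$; hence it is absolute for $\mathbf{M}$ by the $\Delta_0$ absoluteness lemma together with the lemma on closure of absolute notions under composition.

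First I would invoke Lemma \ref{logical equivalence} to replace ``$G$ is torsion-free'' by the displayed right-hand side, noting that the equivalence is a theorem of ZFC and that $\mathbf{M}$ and $\mathbf{V}$ both model ZFC. Next I would note that $\omega$ and $\mathbb{Z}$ are absolute, so the quantifier $\forall n \in \omega$ really does range over the same set in $\mathbf{M}$ as in $\mathbf{V}$ (this is what makes the unbounded-looking quantifier harmless). Then I would appeal to the absoluteness of the defined notions $0_G$ and $nx$ established in the previous lemma, and to the composition lemma, to conclude that the quantifier-free matrix $nx = 0_G \to (x = 0_G \lor n = 0)$ defines an absolute relation; prefixing the bounded quantifiers $\forall x \in G$ and $\forall n \in \omega$ preserves absoluteness for the transitive model $\mathbf{M}$.

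I expect the only real subtlety — and it is a minor one — to be bookkeeping about the quantifier $\forall n \in \omega$: strictly speaking one wants it bounded by the set $\omega$, which requires knowing $\omega \in \mathbf{M}$ and $\omega^{\mathbf{M}} = \omega$, both of which follow from the absoluteness of $\omega$ for transitive models of ZFC. Once that is in place, the argument is a routine instantiation of the general machinery: express the property with bounded quantifiers over absolute sets and an absolute matrix, then cite $\Delta_0$-absoluteness and closure under composition. There are no genuine obstacles; the point of the lemma is simply to record that torsion-freeness fits the pattern.
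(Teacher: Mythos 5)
Your proposal is correct and follows essentially the same route as the paper: the same equivalence $\forall x \in G\ \forall n \in \omega\ (nx=0_G \to (x=0_G \lor n=0))$, justified by substituting the absolute notions $0_G$ and $nx$ into a $\Delta_0$ formula. Your extra remark about why $\forall n \in \omega$ is a legitimate bounded quantifier is a point the paper leaves implicit, but it is the same argument.
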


\begin{proof}
Note that
\[
\textnormal{``$G$ is torsion-free''} \iff \forall x \in G\ \forall n \in \omega\ ( nx=0_G \rightarrow (x=0_G \lor n=0)),
\]
where the right-hand side above is obtained by substituting absolute notions ``\,$0_G$'' and ``$nx$'' into a $\Delta_0$ sentence.
\end{proof}

\subsection{Establishing the Absoluteness of \texorpdfstring{$\aleph_1$-Freeness}{aleph1-Freeness}}

We are nearly ready to establish our main result concerning the absoluteness of $\aleph_1$-freeness. We will review finite rank pure subgroups and Pontryagin's Criterion, the main ingredients of the proof of Theorem \ref{aleph1-freeness}. The interested reader may refer to \cite{Fuchs} for more detail.

\begin{definition}
A subgroup $H$ of an abelian group $G$ is said to be a \emph{pure} subgroup if for any $x \in H$, $0\ne n \in \mathbb{Z}$, $n \mid x$ in $G$ implies $n \mid x$ in $H$. In particular, a subgroup $H$ of a torsion-free group $G$ is pure if and only if $x=ny$ implies $y \in H$ for all $x \in H, y\in G$ and $0\ne n \in \mathbb{Z}$.

The intersection of pure subgroups of a torsion-free group is again pure. Therefore if $G$ is a torsion-free abelian group, and $S$ is a subset of $G$, the intersection of all pure subgroups containing $S$ is the \emph{minimal pure subgroup containing $S$}, which we denote by $\langle S \rangle_*$.

Explicitly, we may write
\[
\langle S \rangle_*\! =\! \{y \in G\! \mid\! \exists n,n_1,\ldots ,n_m\! \in\! \mathbb{Z}, n\neq 0\, \exists s_1,\ldots ,s_m\!\! \in\! S\!:\! ny=n_1s_1+\ldots +n_ms_m \}.
\]
\end{definition}

\begin{lemma} \label{<x>_* absolute}
Suppose $\emph{\textbf{M}}$ is a transitive model of $\operatorname{ZFC}$. Suppose  $( G,+ ) \in \mathbf{M}$ is abelian and $S$ is a finite subset of $G$. Then ``\,$\langle S\rangle _*$'' is an absolute notion for $\emph{\textbf{M}}$.

\begin{proof}
Let $S=\{s_1,\ldots, s_m\}$.
Then we have the logical equivalence
\begin{flalign*}
& z = \langle S \rangle _* \iff \\
& \quad\bigg[ \! \Big[ \forall y \in z\ \big[y \in G
\land \exists n,n_1,\ldots ,n_m \in \mathbb{Z}\ (\neg (n=0) \land ny=n_1s_1+\ldots +n_ms_m)\big]\! \Big] & \\
& \land\! \Big[\forall y \in G\,
\big[(\exists n,n_1,\ldots ,n_m\! \in\! \mathbb{Z}\, (\neg (n=0)
\land ny=n_1s_1\!+\! \ldots\! +\! n_ms_m))\! \rightarrow y \in z\big]\! \Big]\! \bigg],
\end{flalign*}
where the statement on the right-hand side above involves only bounded quantifiers, logical symbols, ``$\mathbb{Z}$'', and various multiples of elements. 
\end{proof}
\end{lemma}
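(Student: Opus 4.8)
The plan is to exhibit the formula ``$z=\langle S\rangle_*$'' as the result of substituting absolute functions into a $\Delta_0$ formula, so that the earlier lemma on $\Delta_0$-absoluteness and the lemma on closure of absoluteness under composition immediately yield the conclusion. First I would fix an enumeration $S=\{s_1,\ldots,s_m\}$, so that $m\in\omega$ is a fixed natural number and $s_1,\ldots,s_m$ become parameters from $\mathbf{M}$; the expression $n_1s_1+\cdots+n_ms_m$ is then a genuine finite term built from the group operation and the multiplication-by-integer functions. Using the explicit description of $\langle S\rangle_*$ from the preceding definition, I would write ``$z=\langle S\rangle_*$'' as the conjunction of the two clauses: (i) $\forall y\in z\ \bigl(y\in G \wedge \exists n,n_1,\ldots,n_m\in\mathbb{Z}\ (n\neq 0 \wedge ny=n_1s_1+\cdots+n_ms_m)\bigr)$, expressing that every element of $z$ lies in the explicit set; and (ii) $\forall y\in G\ \bigl((\exists n,n_1,\ldots,n_m\in\mathbb{Z}\ (n\neq 0 \wedge ny=n_1s_1+\cdots+n_ms_m))\to y\in z\bigr)$, expressing the reverse inclusion.

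Next I would check that, apart from the absolute functions occurring in it, this formula uses only bounded quantifiers together with primitive and logical symbols: every quantifier is bounded by $z$, by $G$, or by $\mathbb{Z}$, and $\mathbb{Z}$ together with its ring operations is absolute by an earlier lemma, as is the multiple $nx$ on $G$ for $n\in\mathbb{Z}$. Hence the displayed formula is obtained by substituting the absolute notions ``$\mathbb{Z}$'' and ``$nx$'' into a $\Delta_0$ formula, and the composition lemma gives that ``$z=\langle S\rangle_*$'' is absolute for $\mathbf{M}$. To make the definition of absoluteness for a defined function applicable, I would also record that ``$\langle S\rangle_*$'' is a well-defined notion in both $\mathbf{M}$ and $\mathbf{V}$: in each model the displayed set-builder defines a unique subset of $G$ by Separation and Extensionality, and since $G\in\mathbf{M}$ and $\mathbf{M}$ is transitive we have $G\subseteq\mathbf{M}$, so no elements of $G$ (and, by absoluteness of $\mathbb{Z}$, no candidate integer coefficients) are lost in passing between the models.

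I do not expect a serious obstacle here; the points requiring care are essentially bookkeeping. One must keep $m$ fixed and finite so that $n_1s_1+\cdots+n_ms_m$ is genuinely a single first-order term rather than something that would require quantification over finite sequences, which is precisely why the hypothesis that $S$ be finite is used. One must also note that the block $\exists n,n_1,\ldots,n_m\in\mathbb{Z}$ counts as a (nested) bounded quantifier, which relies on $\mathbb{Z}$ being an absolute set so that the witnesses available in $\mathbf{V}$ coincide with those in $\mathbf{M}$. With those observations in place, the argument is a direct appeal to the $\Delta_0$-absoluteness and composition lemmas of Section~\ref{sec2}.
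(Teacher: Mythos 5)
Your proposal is correct and follows essentially the same route as the paper: it expresses ``$z=\langle S\rangle_*$'' as the conjunction of the same two bounded-quantifier clauses and then appeals to the $\Delta_0$-absoluteness and composition lemmas. The extra bookkeeping you supply (well-definedness in both models, the role of finiteness of $S$ in keeping $n_1s_1+\cdots+n_ms_m$ a genuine first-order term) is exactly the detail the paper leaves implicit.
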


\begin{remark}
A more careful argument shows that ``\,$\langle S\rangle _*$'' is an absolute notion for any subset $S$ of $G$ in $\emph{\textbf{M}}$. Note also that for any finite set $S\in \emph{\textbf{V}}$ with $S\subseteq G$, we have $\langle S\rangle _*\in \emph{\textbf{M}}$.
\end{remark}

In order to establish our result on the absoluteness of $\aleph_1$-freeness,
we need a simple estimate for the torsion-free rank of $\langle S \rangle_*$. Recall that
the \emph{torsion-free rank $\operatorname{rk}_0(G)$} of a torsion-free abelian group $G$ is defined
as the size of a maximal linearly independent subset $S\subseteq G$.


\begin{lemma}[ZF] \label{lem:S and S'}
If S is a finite subset of a torsion-free abelian group, then
\[
\operatorname{rk}_0(\langle S \rangle_*) \leq |S|.
\]
\end{lemma}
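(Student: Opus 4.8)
The plan is to show that any $|S|+1$ elements of $\langle S \rangle_*$ are linearly dependent, which immediately gives $\operatorname{rk}_0(\langle S \rangle_*) \leq |S|$. Write $S = \{s_1, \ldots, s_m\}$, so $|S| = m$ (if $S$ spans fewer than $m$ independent elements the bound is only easier, so we may as well argue with all of $s_1,\dots,s_m$). By the explicit description of $\langle S \rangle_*$, every $y \in \langle S \rangle_*$ satisfies $ny = n_1 s_1 + \cdots + n_m s_m$ for suitable integers with $n \neq 0$; equivalently, working in the divisible hull $\mathbb{Q} \otimes G$ (or just reasoning with the torsion-freeness directly), each such $y$ lies in the $\mathbb{Q}$-span of $s_1, \ldots, s_m$ inside $\mathbb{Q} \otimes G$.

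First I would make precise that $\langle S \rangle_* \subseteq \operatorname{span}_{\mathbb{Q}}(s_1, \ldots, s_m)$ as subsets of the $\mathbb{Q}$-vector space $V = \mathbb{Q} \otimes G$: given $y$ with $ny = \sum n_i s_i$, we have $y = \sum (n_i/n) s_i$ in $V$ since $G \hookrightarrow V$ (here torsion-freeness guarantees the canonical map $G \to \mathbb{Q} \otimes G$ is injective, so identifying $y$ with its image is legitimate). Then $\operatorname{span}_{\mathbb{Q}}(s_1, \ldots, s_m)$ has dimension at most $m$ over $\mathbb{Q}$, so any $m+1$ elements of $\langle S \rangle_*$ are $\mathbb{Q}$-linearly dependent. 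Finally I would translate $\mathbb{Q}$-linear dependence back to $\mathbb{Z}$-linear dependence: a nontrivial rational relation $\sum q_j y_j = 0$ can be cleared of denominators to a nontrivial integer relation, which is exactly $\mathbb{Z}$-linear dependence in $G$ (again using torsion-freeness so that the cleared relation remains nontrivial). Hence no linearly independent subset of $\langle S \rangle_*$ has more than $m$ elements, giving $\operatorname{rk}_0(\langle S \rangle_*) \leq |S|$.

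Alternatively, to stay strictly within ZF and avoid invoking the divisible hull as an ambient object, I could argue directly: suppose $y_0, y_1, \ldots, y_m \in \langle S \rangle_*$; pick $n^{(j)} \neq 0$ and integers $n^{(j)}_i$ with $n^{(j)} y_j = \sum_i n^{(j)}_i s_i$. The $m+1$ vectors $(n^{(j)}_1, \ldots, n^{(j)}_m) \in \mathbb{Z}^m$ (for $j = 0, \ldots, m$) are $\mathbb{Q}$-linearly dependent since $\mathbb{Q}^m$ has dimension $m$, so there are integers $c_0, \ldots, c_m$, not all zero, with $\sum_j c_j (n^{(j)}_1, \ldots, n^{(j)}_m) = 0$. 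Scaling each term appropriately (multiply the relation through by $\prod_j n^{(j)}$, or handle it coordinatewise), one produces integers $d_0, \ldots, d_m$, not all zero, with $\sum_j d_j y_j = 0$ in $G$ — the key check being that torsion-freeness prevents the scaling from killing all the coefficients. This shows $\{y_0, \ldots, y_m\}$ is linearly dependent.

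The main obstacle, though it is a mild one, is the bookkeeping in the last step: one must be careful that clearing denominators (or multiplying through by the various $n^{(j)}$) does not collapse a nontrivial relation to the trivial one. This is precisely where torsion-freeness of $G$ is used — multiplication by a nonzero integer is injective on $G$ — and it is why the hypothesis appears in the statement. No deep idea is needed; the cleanest exposition is probably the divisible-hull version, with a one-line remark that all constructions are elementary enough to go through in ZF.
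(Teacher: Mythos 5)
Your argument is correct, and both of your variants (the divisible-hull version and the coordinatewise version in $\mathbb{Q}^m$) go through in ZF as you claim: every construction is explicit and the dimension count in $\mathbb{Q}^m$ needs only Gaussian elimination. However, this is not the route the paper takes; in fact the paper explicitly flags your divisible-hull argument in the remark immediately following the lemma as the ``alternative proof'' it chose \emph{not} to give. The paper's own proof stays entirely inside $G$: it picks a maximal linearly independent subset $S' \subseteq S$, shows $\langle S \rangle_* = \langle S' \rangle_*$ by finding a single $N \neq 0$ with $N s_i \in \langle S' \rangle$ for all $i$, and then verifies that $S'$ remains a maximal linearly independent subset of $\langle S \rangle_*$, giving the exact value $\operatorname{rk}_0(\langle S \rangle_*) = |S'| \leq |S|$. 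Your approach buys a shorter, more conceptual argument (an upper bound via a dimension count, with no need to extract $S'$); the paper's approach buys an exact computation of the rank and avoids introducing any auxiliary $\mathbb{Q}$-vector space, which the authors say they prefer so as to keep the set-theoretic bookkeeping in the foreground. One small imprecision on your side: in the denominator-clearing step, what keeps the integer relation nontrivial is simply that you multiply nonzero rationals by nonzero integers (e.g.\ $d_j = c_j n^{(j)}$ with $c_j, n^{(j)} \neq 0$); torsion-freeness is used elsewhere, namely to make $\langle S \rangle_*$ and its explicit description available and to embed $G$ into $\mathbb{Q} \otimes G$, not to protect the coefficients.
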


\begin{proof}
Let $S=\{s_1,\ldots, s_m\}$ and choose a maximal linearly independent subset $S'$ of the finite set $S$. Thus $S'\cup \{s_i\}$ is linearly dependent for all $s_i \in S\setminus S'$, and $S \subseteq \langle S' \rangle_*$ follows. In particular, as $S$ is finite, there exists some $0\ne N \in \mathbb{Z}$ with $Ns_i \in \langle S' \rangle$ for all $s_i\in S$. 

Clearly, $\langle S' \rangle_* \subseteq \langle S \rangle_*$. Let $t \in \langle S \rangle_*$. Then there exist $n, n_1,\ldots ,n_m \in \mathbb{Z}$, $n\not=0$ such that $nt=\sum_{i=1}^{m} n_i s_i \in \langle S \rangle$. Hence $Nnt=\sum_{i=1}^{m} n_i (Ns_i) \in \langle S' \rangle$, which implies $\langle S \rangle_* \subseteq \langle S' \rangle_*$. Thus $\langle S \rangle_* = \langle S' \rangle_*$.

We wish to show $\operatorname{rk}_0(\langle S \rangle_*) = |S'|$. By way of contradiction, suppose there exists some $t \in \langle S \rangle_*$ such that $S' \cup \{t\}$ is linearly independent.
Then $t \in \langle S \rangle_* = \langle S' \rangle_*$. So there exists some $0\ne n \in \mathbb{Z}$ such that $nt \in \langle S' \rangle$, and $t$ is linearly dependent on $S'$, contradicting our assumption.
Thus, $S'$ is a maximal linearly independent subset of $\langle S \rangle_*$, and so $\operatorname{rk}_0(\langle S \rangle_*) = |S'| \leq |S|$.
\end{proof}

\begin{remark}
An alternative proof of Lemma \ref{lem:S and S'} uses the divisible hull \mbox{$\mathbb{Q}\otimes \langle S \rangle$} and $\operatorname{rk}_0(\langle S \rangle_*) = \dim_\mathbb{Q} (\mathbb{Q}\otimes \langle S \rangle) = \operatorname{rk}_0(\langle S \rangle)$. We have chosen a more elementary approach to better focus on the underlying aspects of set theory.

More generally, using the axiom of choice {\rm (AC)}, $\operatorname{rk}_0(\langle S \rangle_*) \leq |S|$ holds for any subset of a torsion-free group.
\end{remark}

We recall Pontryagin's Criterion next. For a proof, see \cite[Theorem 7.1]{Fuchs}.

\begin{theorem}[Pontryagin's Criterion]
A countable torsion-free abelian group is free if and only if each of its finite rank subgroups is free.
\end{theorem}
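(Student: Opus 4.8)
The plan is to prove the two implications separately, with essentially all of the work residing in the ``if'' direction. For the ``only if'' direction, suppose $G$ is free. I would invoke the classical fact that every subgroup of a free abelian group is again free; applied to the finite rank subgroups in particular, this yields the claim immediately. For countable $G$ this fact admits an elementary proof by induction along an enumeration of a basis, so no heavy machinery is required.

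For the substantive ``if'' direction, assume $G$ is countable, torsion-free, and that each of its finite rank subgroups is free; I want to conclude that $G$ is free. First I would fix an enumeration $G = \{g_0, g_1, g_2, \ldots\}$ and set $G_n = \langle g_0, \ldots, g_{n-1}\rangle_*$, the minimal pure subgroup generated by the first $n$ elements, so that $G_0 = \{0\}$ and $G_n \subseteq G_{n+1}$ form an ascending chain. By Lemma \ref{lem:S and S'}, each $G_n$ has finite torsion-free rank (at most $n$), so by hypothesis each $G_n$ is free; being free of finite rank, each $G_n$ is in fact finitely generated.

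The heart of the argument is to show that each inclusion $G_n \subseteq G_{n+1}$ splits. Since $G_n$ is pure in $G$, it is a fortiori pure in $G_{n+1}$, so the quotient $G_{n+1}/G_n$ is torsion-free; being also finitely generated (as a quotient of the finitely generated group $G_{n+1}$), it is free. Freeness of the quotient makes the short exact sequence $0 \to G_n \to G_{n+1} \to G_{n+1}/G_n \to 0$ split, yielding a free complement $F_{n+1}$ with $G_{n+1} = G_n \oplus F_{n+1}$. Iterating gives $G_n = F_1 \oplus \cdots \oplus F_n$, and since every $g_i$ lies in $G_{i+1}$ we have $G = \bigcup_n G_n = \bigoplus_{n \geq 1} F_n$, a direct sum of free groups and hence free.

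The main obstacle I anticipate is the verification that purity is inherited and then translates into a splitting: concretely, that $G_n$ being pure in $G$ forces $G_{n+1}/G_n$ to be torsion-free, that a finitely generated torsion-free abelian group is free, and that freeness of the quotient suffices to split the extension. Each of these is standard, but they must be assembled in the right order, and the role of Lemma \ref{lem:S and S'} is precisely to guarantee finite rank — hence, via the hypothesis, finite generation — at every stage, which is what upgrades ``torsion-free quotient'' to ``free quotient.'' A secondary point to handle cleanly is that the ascending union of the compatible direct sum decompositions genuinely produces a basis of all of $G$; this holds because each basis element introduced at stage $n$ is never disturbed at later stages.
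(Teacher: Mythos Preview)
Your argument is correct and is the classical proof of Pontryagin's Criterion: build the pure filtration $G_n=\langle g_0,\ldots,g_{n-1}\rangle_*$, use finite rank plus the hypothesis to get each $G_n$ finitely generated free, then split each successive quotient and assemble a basis. The paper does not supply its own proof of this statement---it simply cites \cite[Theorem~7.1]{Fuchs}---and what you have written is essentially the standard proof one finds there, so there is nothing further to compare.
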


The theorem below gives a number of equivalent characterizations of $\aleph_1$-freeness. We will use the last of these alternative characterizations to prove the absoluteness of $\aleph_1$-freeness. Note that $\operatorname{rk}(G)$ denotes the rank of $G$, and $\operatorname{rk_0}(G)$ denotes its torsion-free rank. 

\begin{theorem} \label{aleph1-freeness}
Let $\emph{\textbf{M}}$ be a transitive model of $\operatorname{ZFC}$ and $G$ be an abelian group in $\emph{\textbf{M}}$. The following statements are equivalent:
\begin{itemize}
\item[$(i)$] $G$ is $\aleph_1$-free, that is, for all subgroups $H$ of $G$, if $|H| \leq \aleph_0$, then $H$ is free.
\item[$(ii)$] For all subgroups $H$ of $G$, if $\operatorname{rk}(H)$ is finite, then $H$ is free.
\item[$(iii)$] $G$ is torsion-free and for all pure subgroups $H$ of G, if $\operatorname{rk}(H)$ is finite, then $H$ is free.
\item[$(iv)$] $G$ is torsion-free and for all finite subsets $S$ of $G$, $\langle S \rangle_*$ is free.
\end{itemize}
\begin{proof}
For $(i) \rightarrow (ii)$, let $H$ be a subgroup of $G$ with $\operatorname{rk}(H)$ finite. By $(i)$, all cyclic subgroups of $G$ are free. Hence $G$ is torsion-free, and $H\subseteq G$ is torsion-free, too. This implies $\operatorname{rk}(H)=\operatorname{rk}_0(H)$ and $|H| \le \aleph_0 \cdot \operatorname{rk}(H)$. Hence $|H| \leq \aleph_0$, and $H$ is free.

The direction $(ii) \rightarrow (iii) \rightarrow (iv)$ is easy. For $(ii) \rightarrow (iii)$, note that with $(ii)$ every cyclic subgroup of $G$ is free, hence $G$ is torsion-free. For $(iii) \rightarrow (iv)$, note that if $S$ is finite, then $\langle S \rangle_*$ is of finite rank by Lemma \ref{lem:S and S'}.

To see that $(iv) \rightarrow (i)$, let $H$ be a subgroup of $G$ with $|H| \leq \aleph_0$. If $H=0$, $H$ is free, so suppose that $H$ is non-trivial. Then as $G$ is torsion-free, $|H|=\aleph_0$.
We wish to use Pontryagin's Criterion to show that $H$ is free, so let $K$ be a finite rank subgroup of $H$. Choose $S$ to be a maximal linearly independent subset of $K$. Then as $S$ is finite, $\langle S \rangle_*$ is free, and $K \subseteq \langle S \rangle_*$ is free, too. So by Pontryagin's Criterion, $H$ is free.
\end{proof}
\end{theorem}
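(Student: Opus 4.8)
The plan is to establish the cycle of implications $(i)\Rightarrow(ii)\Rightarrow(iii)\Rightarrow(iv)\Rightarrow(i)$. Three of the four arrows are essentially immediate; the substantive content sits in a cardinality estimate for $(i)\Rightarrow(ii)$ and an application of Pontryagin's Criterion for $(iv)\Rightarrow(i)$, with Lemma \ref{lem:S and S'} supplying the bridge from finite subsets to finite rank.

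For $(i)\Rightarrow(ii)$, I would first note that $\aleph_1$-freeness makes every cyclic subgroup free, so $\langle x\rangle\cong\mathbb{Z}$ whenever $x\ne 0$, and hence $G$ is torsion-free. Given a subgroup $H\le G$ of finite rank $r$, torsion-freeness gives $\operatorname{rk}(H)=\operatorname{rk}_0(H)=r$; fixing a maximal independent set $\{x_1,\dots,x_r\}\subseteq H$, each $h\in H$ admits a least positive integer $c$ with $ch\in\langle x_1,\dots,x_r\rangle\cong\mathbb{Z}^r$, and $h\mapsto(c,ch)$ is injective into the countable set $\mathbb{N}\times\mathbb{Z}^r$ (equivalently, $|H|\le\aleph_0\cdot\operatorname{rk}(H)$). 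Thus $|H|\le\aleph_0$, and $(i)$ makes $H$ free.

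For $(ii)\Rightarrow(iii)$ I would again use that $(ii)$ makes all cyclic subgroups free, so $G$ is torsion-free, and that a finite rank pure subgroup is in particular a finite rank subgroup, hence free by $(ii)$. For $(iii)\Rightarrow(iv)$: $G$ is torsion-free, and for finite $S\subseteq G$ the subgroup $\langle S\rangle_*$ is pure with $\operatorname{rk}_0(\langle S\rangle_*)\le|S|$ by Lemma \ref{lem:S and S'}, so being torsion-free it has finite rank, and $(iii)$ gives that $\langle S\rangle_*$ is free. Finally, for $(iv)\Rightarrow(i)$: $G$ is torsion-free, so a countable subgroup $H\le G$ is either $0$ (free) or has $|H|=\aleph_0$. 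To invoke Pontryagin's Criterion I must show every finite rank subgroup $K\le H$ is free; choosing a finite maximal independent subset $S\subseteq K$, every element of $K$ is rationally dependent on $S$, so $K\subseteq\langle S\rangle_*$, which is free by $(iv)$ and of finite rank, hence finitely generated free. Since subgroups of finitely generated free abelian groups are free, $K$ is free, and Pontryagin's Criterion then yields that $H$ is free.

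The main obstacle is less a single hard step than a matter of keeping the hypotheses honest: one must check that Lemma \ref{lem:S and S'} is available in exactly the form used (it is, being stated in ZF), that computing $\langle S\rangle_*$ inside $G$ rather than inside $H$ in $(iv)\Rightarrow(i)$ is harmless (all that is needed is \emph{some} free group containing $K$), and that the cardinality bound in $(i)\Rightarrow(ii)$ is obtained without appealing to a divisible hull. Everything else is routine bookkeeping along the cycle.
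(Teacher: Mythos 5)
Your proposal is correct and follows essentially the same route as the paper: the cycle $(i)\Rightarrow(ii)\Rightarrow(iii)\Rightarrow(iv)\Rightarrow(i)$, with the cardinality bound $|H|\le\aleph_0\cdot\operatorname{rk}(H)$ for $(i)\Rightarrow(ii)$, Lemma \ref{lem:S and S'} for $(iii)\Rightarrow(iv)$, and Pontryagin's Criterion applied via $K\subseteq\langle S\rangle_*$ for $(iv)\Rightarrow(i)$. The extra details you supply (the injection $h\mapsto(c,ch)$ and the remark that $\langle S\rangle_*$ need only be \emph{some} finite rank free group containing $K$) are correct elaborations of steps the paper leaves implicit.
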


Finally, we are ready to establish the absoluteness of $\aleph_1$-freeness below.

\begin{theorem} \label{aleph1-free absolute}
Suppose $\emph{\textbf{M}}$ is a transitive model of $\operatorname{ZFC}$, and $G$ is an abelian group in $\emph{\textbf{M}}$. Then ``\,$G$ is $\aleph_1$-free'' is absolute.
\begin{proof}
Let $\phi$ denote the statement
\[
G \textnormal{ is torsion-free } \land\ \forall S(( S \subseteq G \land S \textnormal{ is finite}) \rightarrow \langle S \rangle_* \textnormal{ is free}).
\]
By Theorem \ref{aleph1-freeness}, ``$G$ is $\aleph_1$-free'' is equivalent to $\phi$. To establish the absoluteness of $\aleph_1$-freeness, we will show that $\forall G \in \mathbf{M}\ (\phi ^ \textbf{M} \longleftrightarrow \phi)$.

We need to determine $\phi ^ \textbf{M}$ first. Recall that torsion-freeness, set inclusion, finiteness and ``\,$\langle S \rangle_*$'' (for finite subsets $S$ of a torsion-free abelian group) are absolute. Unfortunately, however, freeness is not absolute. Thus $\phi ^ \textbf{M}$ is the statement
\[
G \textnormal{ is torsion-free } \land\ \forall S\in \textbf{M}\ (( S \subseteq G \land S \textnormal{ is finite}) \rightarrow \langle S \rangle_* \textnormal{ is free$^\textbf{M}$}).
\]

For $\phi ^ \textbf{M} \rightarrow \phi$, let $S\in \textbf{V}$ such that $S \subseteq G$ and $S$ is finite, and assume $\phi ^ \textbf{M}$. Note that by our previous result on the absoluteness of finite sets, as $G \in \textbf{M}$, if $S$ is a finite subset of $G$ then $S \in \textbf{M}$. Thus, $\langle S \rangle_*$ has a basis in $\textbf{M}$, which is automatically a basis of $\langle S \rangle_*$ in $\textbf{V}$.\par
For $\phi \rightarrow \phi ^ \textbf{M}$, let $S\in \textbf{M}$ such that $S \subseteq G$ and $S$ is finite, and assume~$\phi$. Then  $\langle S \rangle_*$ is free in $\textbf{V}$. Choose a basis $B\in \textbf{V}$ of $\langle S \rangle_*$. Then, still in $\textbf{V}$, $\operatorname{rk}_0(\langle S \rangle_*)=|B|$ and $\operatorname{rk}_0(\langle S \rangle_*) \leq |S|$. So $|B| \leq |S|$, and thus $B\subseteq \langle S \rangle_* \subseteq G$ is a finite set. So by the absoluteness of finite sets, $B \in \textbf{M}$, and $B$ will witness that $\langle S \rangle_*$ is free in $\textbf{M}$. \par
Thus, $\aleph_1$-freeness is absolute.
\end{proof}
\end{theorem}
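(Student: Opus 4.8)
The plan is to use the equivalence $(i) \Leftrightarrow (iv)$ from Theorem \ref{aleph1-freeness} to reduce ``$G$ is $\aleph_1$-free'' to the sentence
\[
\phi \colon\quad G \textnormal{ is torsion-free } \land\ \forall S\big((S \subseteq G \land S \textnormal{ is finite}) \rightarrow \langle S \rangle_* \textnormal{ is free}\big),
\]
and then show directly that $\phi^{\mathbf{M}} \leftrightarrow \phi$ for every abelian $G \in \mathbf{M}$. Since logical equivalence provable in ZFC preserves absoluteness (Lemma \ref{logical equivalence}), it suffices to prove $\phi$ is absolute. The point of passing to $\phi$ is that every ingredient of $\phi$ \emph{except} freeness has already been shown absolute: torsion-freeness, ``$S \subseteq G$'', ``$S$ is finite'', and the operator $\langle S \rangle_*$ on finite subsets of a torsion-free group. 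So after expanding the relativization, $\phi^{\mathbf{M}}$ reads exactly like $\phi$ but with the bound variable $S$ restricted to $\mathbf{M}$ and ``free'' replaced by ``free$^{\mathbf{M}}$''.

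First I would write out $\phi^{\mathbf{M}}$ explicitly, invoking the absoluteness facts above so that the only genuine differences from $\phi$ are the restricted quantifier $\forall S \in \mathbf{M}$ and the relativized predicate ``free$^{\mathbf{M}}$''. Then I would prove the two implications separately. For $\phi^{\mathbf{M}} \rightarrow \phi$: given any finite $S \subseteq G$ in $\mathbf{V}$, absoluteness of finite sets (together with $G \in \mathbf{M}$) forces $S \in \mathbf{M}$, so $\phi^{\mathbf{M}}$ tells us $\langle S \rangle_*$ has a basis $B \in \mathbf{M}$; a basis is witnessed by a $\Delta_0$-type condition (it generates, and finite subsets are linearly independent — note linear independence of a finite set is expressible with bounded quantifiers and absolute multiples $n_i x$), so $B$ remains a basis in $\mathbf{V}$. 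Hence $\langle S \rangle_*$ is free in $\mathbf{V}$.

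For $\phi \rightarrow \phi^{\mathbf{M}}$: given finite $S \in \mathbf{M}$ with $S \subseteq G$, $\phi$ gives a basis $B \in \mathbf{V}$ of $\langle S \rangle_*$. The crucial move is a cardinality bound: by Lemma \ref{lem:S and S'}, $\operatorname{rk}_0(\langle S \rangle_*) \le |S|$, and since $B$ is a basis, $|B| = \operatorname{rk}_0(\langle S \rangle_*) \le |S| < \aleph_0$. So $B$ is a \emph{finite} subset of $\langle S \rangle_* \subseteq G$, hence $B \in \mathbf{M}$ by absoluteness of finite sets, and — again because ``$B$ is a basis of $\langle S \rangle_*$'' is built from absolute pieces — $B$ witnesses freeness of $\langle S \rangle_*$ inside $\mathbf{M}$.

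The main obstacle is precisely this asymmetry: freeness per se is \emph{not} absolute (a witnessing basis living in $\mathbf{V}$ need not lie in $\mathbf{M}$), so the argument cannot just quote an absoluteness lemma for ``free''. What rescues the $\phi \rightarrow \phi^{\mathbf{M}}$ direction is that the relevant groups $\langle S \rangle_*$ have \emph{finite} rank, so any basis is finite, and finite subsets of a transitive model are automatically in the model. Thus the real content is the finite-rank estimate of Lemma \ref{lem:S and S'} plus absoluteness of finite sets; everything else is bookkeeping with $\Delta_0$ formulas and the already-established absoluteness of $\langle S \rangle_*$.
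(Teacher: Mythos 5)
Your proposal is correct and follows essentially the same route as the paper's proof: reduce to characterization $(iv)$ of Theorem \ref{aleph1-freeness}, observe that every ingredient except freeness is absolute, and then handle the two directions by noting that a finite $S\subseteq G$ lies in $\mathbf{M}$ (so a basis in $\mathbf{M}$ persists to $\mathbf{V}$) and that, conversely, any basis of $\langle S\rangle_*$ is finite by Lemma \ref{lem:S and S'} and hence lies in $\mathbf{M}$. Your added remark that ``$B$ is a basis'' is itself expressible by bounded quantifiers is a small elaboration the paper leaves implicit, but the argument is the same.
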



\section{Proofs with Model Extensions}

In this section, we discuss some major applications and consequences of the absoluteness of $\aleph_1$-freeness. We start with a general observation concerning the relationship between $\aleph_1$-freeness and freeness in different models of set theory. We then demonstrate how this observation can be turned into a quick and elegant routine for generating and simplifying proofs concerning $\aleph_1$-free groups.

We will repeatedly reference the use of forcing to collapse the cardinality of a given $\aleph_1$-free group $G$ to countable. Such a forcing may be defined by the partial order consisting of all injective functions from finite subsets of $\omega$ into $G$ which results in a model extension in which $G$ is countable. Thus, by the absoluteness of $\aleph_1$-freeness, $G$ is free in this model extension, as it is a countable subgroup of itself. For more detail on this type of forcing, we refer to Kunen~[3]. However, the technical details of forcing are not necessary in order to understand the applications below. Rather, we simply reference forcing as a method for producing a model extension with some required properties, namely with the property that $G$ is free in this model extension.\smallskip

The following result is another take on Theorem \ref{aleph1-free absolute} and highlights how $\aleph_1$-freeness as an initially algebraic property can be interpret and understood in the context of model extensions in set theory.

\begin{theorem} \label{absproof}
Let $\emph{\textbf{M}}$ be a transitive model of {\rm ZFC}, and $G$ an abelian group in~$\emph{\textbf{M}}$. Then the following are equivalent:
\begin{itemize}
    \item[$(i)$] $G$ is $\aleph_1$-free in $\emph{\textbf{M}}$.
    \item[$(ii)$] $G$ is $\aleph_1$-free in $\emph{\textbf{V}}$.
    \item[$(iii)$] $G$ is $\aleph_1$-free in any transitive model $\emph{\textbf{N}}$ with $G \in \emph{\textbf{N}}$.
\end{itemize}
In addition, if $\emph{\textbf{M}}$ is a countable transitive model of {\rm ZFC},
we may add to the above list of equivalent statements:
\begin{itemize}
\item[$(iv)$] $G$ is free in some generic extension $\emph{\textbf{N}}$ of $\emph{\textbf{M}}$.
\end{itemize}
\begin{proof}
The equivalence of $(i)$, $(ii)$ and $(iii)$ is an immediate consequence of the absoluteness of $\aleph_1$-freeness, Theorem \ref{aleph1-free absolute}. 

We have $(iv) \rightarrow (i)$ as $G$ is free in $\textbf{N}$ implies $G$ is $\aleph_1$-free in $\textbf{N}$, and thus by the absoluteness of $\aleph_1$-freeness, $G$ is $\aleph_1$-free in $\textbf{M}$. 
Finally, $(i) \rightarrow (iv)$ can be seen by letting $\textbf{N}$ be the generic extension obtained by collapsing the cardinality of $G$ to countable. 
\end{proof}
\end{theorem}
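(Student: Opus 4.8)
The plan is to prove Theorem~\ref{absproof} by leveraging Theorem~\ref{aleph1-free absolute} for the equivalence of $(i)$, $(ii)$, $(iii)$, and then handling $(iv)$ separately via an explicit forcing construction.

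First I would observe that the absoluteness of $\aleph_1$-freeness, as established in Theorem~\ref{aleph1-free absolute}, applies to \emph{any} transitive model of ZFC containing $G$. Applying it with the model $\mathbf{M}$ gives $(i) \leftrightarrow (ii)$; applying it with an arbitrary transitive model $\mathbf{N} \ni G$ gives that ``$G$ is $\aleph_1$-free'' is equivalent in $\mathbf{N}$ and in $\mathbf{V}$, hence $(ii) \leftrightarrow (iii)$. A minor point worth stating is that $(iii) \rightarrow (ii)$ is immediate since $\mathbf{V}$ itself is such an $\mathbf{N}$, and conversely $(ii) \rightarrow (iii)$ quantifies over all such $\mathbf{N}$; each instance follows from one application of Theorem~\ref{aleph1-free absolute}. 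So this half of the proof is essentially a direct citation.

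For the second half, assuming $\mathbf{M}$ is a countable transitive model, I would prove $(iv) \rightarrow (i)$ and $(i) \rightarrow (iv)$. The direction $(iv) \rightarrow (i)$ is easy: if $G$ is free in some generic extension $\mathbf{N}$ of $\mathbf{M}$, then in particular $G$ is $\aleph_1$-free in $\mathbf{N}$ (every subgroup of a free abelian group is free, by the Nielsen--Schreier-type theorem for abelian groups, or simply since subgroups of free abelian groups are free), and then since $\mathbf{N}$ is a transitive model of ZFC containing $G$, absoluteness brings this back down to $\mathbf{M}$, giving $(i)$. For $(i) \rightarrow (iv)$, I would take the forcing notion $\mathbb{P}$ consisting of all injective finite partial functions from $\omega$ to $G$, ordered by reverse inclusion, and let $\mathbf{N} = \mathbf{M}[G']$ for a $\mathbb{P}$-generic filter $G'$ over $\mathbf{M}$. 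A density argument shows the union of the generic filter is a bijection between $\omega$ and $G$ in $\mathbf{N}$, so $G$ is countable in $\mathbf{N}$; since $G$ remains $\aleph_1$-free in $\mathbf{N}$ by absoluteness (applied to $\mathbf{N}$), and $G$ is a countable subgroup of itself, $G$ is free in $\mathbf{N}$.

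The main subtlety — more meta-mathematical than mathematical — is ensuring that the forcing machinery is legitimately available, which is exactly why the hypothesis that $\mathbf{M}$ be a \emph{countable} transitive model is added for $(iv)$; this is the standard setting in which generic filters provably exist. The genuinely mathematical content is light: it reduces to the density fact that $\bigcup G'$ is a bijection $\omega \to G$, which is the usual collapsing-forcing argument, plus the two invocations of absoluteness. I do not anticipate a serious obstacle; the only care needed is to phrase the forcing so that $G$ itself (not merely its cardinality as computed in $\mathbf{M}$) is what gets enumerated, so that freeness of $G$ in $\mathbf{N}$ follows directly from $\aleph_1$-freeness there.
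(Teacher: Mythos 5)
Your proposal is correct and follows essentially the same route as the paper: the equivalence of $(i)$--$(iii)$ is a direct citation of Theorem \ref{aleph1-free absolute}, $(iv)\rightarrow(i)$ uses that free implies $\aleph_1$-free plus absoluteness, and $(i)\rightarrow(iv)$ uses the collapse forcing by finite injective partial functions from $\omega$ into $G$, which is exactly the forcing the paper describes at the start of its Section 4. The extra detail you supply on the density argument is a welcome elaboration but not a different method.
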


Theorem \ref{absproof} provides a new approach to proving statements about $\aleph_1$-free groups. To illustrate the utility of such an approach, we give a remarkably simple proof of the well-known transitivity of $\aleph_1$-free groups. We will be using countable transitive models of ZFC as is common convention for forcing arguments. It should be understood that countable transitive models only exist for finite lists of axioms and we will provide in Remark \ref{metath} an explanation of how our proof with countable transitive models translates into a formal proof within the metatheory. 

\begin{theorem} \label{ex1}
If $H$ and $G/H$ are $\aleph_1$-free for abelian groups $H\subseteq G$, then $G$ is $\aleph_1$-free.
\begin{proof} We will assume a countable transitive model $\textbf{M}$ with $H,G \in \textbf{M}$.
Let $\textbf{N}$ be the generic extension of $\textbf{M}$ produced by collapsing the cardinality of $G$ to countable. Then $G/H$ is countable and $\aleph_1$-free in $\textbf{N}$, thus it is free in $\textbf{N}$. In particular, we have $G^\textbf{N} \cong H \oplus G/H$, and as $H$ is also countable and free in $\textbf{N}$, $G$ is free in $\textbf{N}$. 
With Theorem \ref{absproof}, $(iv) \rightarrow (i)$, $G$ is $\aleph_1$-free in~$\textbf{M}$.
\end{proof}
\end{theorem}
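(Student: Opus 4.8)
The plan is to exploit the characterization of $\aleph_1$-freeness as ``potential freeness'' provided by Theorem \ref{absproof}, combined with the fact that freeness behaves well under short exact sequences. The key observation is that while the classical proof requires delicate algebraic bookkeeping (choosing a transversal, verifying that a basis of $H$ together with lifts of a basis of $G/H$ forms a basis of $G$, and ensuring compatibility across all countable subgroups simultaneously via Pontryagin's Criterion), all of this collapses trivially once we pass to a model extension in which $G$ is genuinely countable: in that model, freeness of $G/H$ means the short exact sequence $0 \to H \to G \to G/H \to 0$ splits, so $G \cong H \oplus G/H$, and a direct sum of two free groups is free.

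The steps, in order, are as follows. First I would fix a countable transitive model $\mathbf{M}$ of (a sufficiently large finite fragment of) ZFC with $H, G \in \mathbf{M}$; note that since $H \subseteq G$ and $G \in \mathbf{M}$, the hypothesis that $H$ and $G/H$ are $\aleph_1$-free is, by Theorem \ref{absproof}, equivalent to their being $\aleph_1$-free in $\mathbf{M}$. Second, I would force over $\mathbf{M}$ with the poset of finite injective partial functions $\omega \to G$ to obtain a generic extension $\mathbf{N}$ in which $G$ (and hence also its subgroup $H$ and quotient $G/H$) are countable. Third, I would invoke the absoluteness of $\aleph_1$-freeness again, this time in the direction $(i) \to (iv)$ relative to $H$ and to $G/H$: since these groups are $\aleph_1$-free in $\mathbf{M}$, they remain $\aleph_1$-free in $\mathbf{N}$, and being countable there, they are in fact free in $\mathbf{N}$. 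Fourth, since $G/H$ is free in $\mathbf{N}$, the extension $0 \to H \to G \to G/H \to 0$ splits in $\mathbf{N}$, so $G^{\mathbf{N}} \cong H \oplus G/H$ is a direct sum of two free groups, hence free. Finally, applying Theorem \ref{absproof} in the direction $(iv) \to (i)$ to $G$ itself yields that $G$ is $\aleph_1$-free in $\mathbf{M}$, and one more application of absoluteness transfers this back to $\mathbf{V}$.

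I do not expect a genuine mathematical obstacle here — the argument is deliberately short. The main point requiring care is meta-mathematical rather than mathematical: countable transitive models of full ZFC need not exist, so the phrase ``countable transitive model of ZFC'' must be understood as a countable transitive model of a large enough finite fragment of ZFC, and the whole argument must be read as carried out inside such a fragment. This is the standard device for justifying forcing arguments, and the paper signals (via its forward reference to Remark \ref{metath}) that it will address this explicitly; for the proof proper, it suffices to run the argument as above. A secondary point worth double-checking is that the splitting of the sequence and the fact that a sum of free groups is free are absolute/elementary facts that hold in any model of (the relevant fragment of) ZFC, which they are, since "free" and "direct sum" and "splitting of a short exact sequence" are all witnessed by explicit sets (bases, projection maps) that exist in $\mathbf{N}$.
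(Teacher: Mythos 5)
Your proposal is correct and follows essentially the same route as the paper's own proof: collapse $|G|$ to countable over a countable transitive model, use absoluteness to get $H$ and $G/H$ free in the extension, split the sequence to get $G \cong H \oplus G/H$ free, and pull back via Theorem \ref{absproof} $(iv)\rightarrow(i)$. Your explicit mention of the splitting of $0 \to H \to G \to G/H \to 0$ (justifying $G^{\mathbf{N}} \cong H \oplus G/H$) and of the finite-fragment caveat are exactly the details the paper leaves implicit or defers to Remark \ref{metath}.
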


In a nutshell, the absoluteness of $\aleph_1$-freeness lets us prove statements about $\aleph_1$-free groups by focusing entirely on the special case
of countable free groups. While we do provide a formal justification of why we can restrict our attention to the special case of countable groups, philosophically this is perhaps unsurprising from the perspective of all infinite sets being potentially countable \cite{Pruss}. We provide one more demonstration of this novel
proof technique.

\begin{theorem} \label{ex2}
Let $G$ be $\aleph_1$-free and let $H \subseteq G$ be a finite rank pure subgroup of $G$. Then $G/H$ is $\aleph_1$-free.
\begin{proof} 
Without loss of generality, assume $G \in \textbf{M}$ for some countable transitive model $\textbf{M}$ of ZFC. Suppose that $\textbf{N}$ is some generic model extension of $\textbf{M}$ in which $G$ is countable. Then $G$ is free in $\textbf{N}$. So in $\textbf{N}$, as $G$ is free, we can choose a basis $B$ of $G$.

Let $H$ be a finite rank pure subgroup of $G$ (recall that this is an absolute property). Then $H$ is free in $\textbf{N}$ of finite rank, so we can choose $B' \subseteq B$ finite with $H \subseteq \langle B' \rangle $.
Now $H$ is a pure subgroup of $G$, and thus, $H$ is pure in $\langle B' \rangle $. So $\langle B' \rangle / H$ is torsion-free, and as it is also finitely generated, $\langle B' \rangle / H$ is free by the fundamental theorem of abelian groups.
So $H$ is a direct summand of $\langle B' \rangle$, which is a direct summand of $\langle B \rangle = G$. Thus $G/H$ is free in $\textbf{N}$, and by the absoluteness of $\aleph_1$-freeness, $G/H$ is $\aleph_1$-free in $\textbf{M}$.
\end{proof}
\end{theorem}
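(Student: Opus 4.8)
\textbf{Proof proposal for Theorem \ref{ex2}.}

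The plan is to mirror the model-extension routine already used in Theorem \ref{ex1}: collapse the cardinality of $G$ to countable, work in the resulting generic extension where $G$ becomes free, prove the finite-rank-pure quotient is free there by elementary finitely-generated-abelian-group arguments, and finally pull the conclusion back via the absoluteness of $\aleph_1$-freeness. First I would fix (without loss of generality) a countable transitive model $\textbf{M}$ of a sufficiently large finite fragment of ZFC with $G\in\textbf{M}$, and let $\textbf{N}$ be the generic extension collapsing $|G|$ to $\aleph_0$. By Theorem \ref{absproof}, $(i)\to(iv)$, $G$ is free in $\textbf{N}$, so I can pick a basis $B$ of $G$ in $\textbf{N}$.

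Next I would use the fact — noted earlier in the excerpt as an absolute property — that $H$ being a finite rank pure subgroup of $G$ transfers to $\textbf{N}$. In $\textbf{N}$, $H$ is then a countable $\aleph_1$-free (indeed free) group of finite rank, so $H$ is finitely generated; pick a finite generating set and a finite $B'\subseteq B$ with $H\subseteq\langle B'\rangle$. The key algebraic step is then entirely inside the finitely generated free group $\langle B'\rangle$: since $H$ is pure in $G$, it is pure in $\langle B'\rangle$, so $\langle B'\rangle/H$ is torsion-free; being also finitely generated, it is free by the fundamental theorem of finitely generated abelian groups. Hence the short exact sequence $0\to H\to\langle B'\rangle\to\langle B'\rangle/H\to 0$ splits, making $H$ a direct summand of $\langle B'\rangle$, which is itself a direct summand of $\langle B\rangle=G$. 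Therefore $G/H$ is (isomorphic to) a direct summand of the free group $G$ in $\textbf{N}$, hence free in $\textbf{N}$.

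Finally, since $G/H$ is free in $\textbf{N}$ it is $\aleph_1$-free in $\textbf{N}$, and by the absoluteness of $\aleph_1$-freeness (Theorem \ref{aleph1-free absolute}, via Theorem \ref{absproof}) $G/H$ is $\aleph_1$-free back in $\textbf{M}$, and hence in $\textbf{V}$. I do not anticipate a serious obstacle here: the one point requiring a little care is confirming that ``$H$ is a finite rank pure subgroup of $G$'' really is expressible by absolute formulas — purity is a $\Delta_0$-style condition and finiteness of rank can be phrased using the already-established absoluteness of $\langle S\rangle_*$ and of finite sets (via Lemma \ref{lem:S and S'}, so that finite rank means $H=\langle S\rangle_*$ for some finite $S$, or more directly that every finite subset of $H$ spans a bounded-rank pure subgroup). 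The metamathematical bookkeeping about finite fragments of ZFC is handled exactly as announced in the surrounding discussion (and deferred to Remark \ref{metath}), so it need not be belabored in the proof itself.
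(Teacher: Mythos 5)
Your proposal is correct and follows essentially the same route as the paper's own proof: collapse $|G|$ to countable, pick a basis $B$ of the now-free $G$, trap $H$ inside a finitely generated $\langle B'\rangle$, use purity plus the fundamental theorem of finitely generated abelian groups to split off $H$, and pull the freeness of $G/H$ back via absoluteness. The extra remarks on the absoluteness of ``finite rank pure subgroup'' and the finite-fragment bookkeeping are consistent with what the paper defers to Remark \ref{metath}.
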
 

\begin{remark} \label{metath}
We will discuss a more formal argument for Theorem \ref{ex1} and Theorem \ref{ex2} to explain why we can restrict ourselves to countable transitive models $\emph{\textbf{M}}$ with $H,G \in \emph{\textbf{M}}$.

Let $\psi$ denote the first-order logical sentence which expresses the statement of Theorem~\ref{ex1} (or Theorem \ref{ex2}) and note that the proofs of Theorem~\ref{ex1} and Theorem~\ref{ex2} can be formalized using a finite list of axioms $\varphi_1,\ldots,\varphi_n$ of {\rm ZFC}. If $\psi$ were not provable on the basis of {\rm ZFC}, then G\"{o}del's Completeness Theorem implies the existence of a model for {\rm ZFC + $\neg \psi$}. In particular,
the finite list of axioms $\varphi_1,\ldots,\varphi_n, \neg \psi$ is consistent and  a standard procedure using the
Reflection Theorem, L\"{o}wenheim-Skolem Theorem, and Mostowski Collapse Lemma produces a countable transitive model $\emph{\textbf{M}}$ for $\varphi_1,\ldots,\varphi_n,$ $\neg \psi$.
In particular, in $\emph{\textbf{M}}$ we can find abelian groups
$G,H$ for which $\psi$ fails and going from $\emph{\textbf{M}}$ to
a generic model extension $\emph{\textbf{N}}$ where $G^\emph{\textbf{N}}$ is countable we can reproduce the proofs of Theorem~\ref{ex1} and Theorem~\ref{ex2} for a contradiction.
\end{remark} 


\section{Concluding Remarks}

Strategic use of the absoluteness of $\aleph_1$-free groups provides a new powerful tool for proving results concerning $\aleph_1$-free groups. In addition, this absoluteness implies that a group $G$ is $\aleph_1$-free in a countable transitive model \textbf{M} of {\rm ZFC} if and only if it is free in some generic model extension \textbf{N} of \textbf{M}. It is this observation which constitutes the starting point of our forthcoming series of papers constructing and exploring models of {\rm ZFC} related to $\aleph_1$-free groups. Further research in this area will illuminate the relationship between the algebraic and set-theoretic properties of $\aleph_1$-free groups, contributing to a deeper understanding of these groups and expanding the potential for their application.

\end{document}